\def\NZQ{\mathbb}               
\def\ZZ{{\NZQ Z}}
\def\frk{\mathfrak}               
\def\mm{{\frk m}}
\def\G{{\mathcal G}}
\def\pd{\textup{proj}\phantom{.}\!\textup{dim}}
\def\opn#1#2{\def#1{\operatorname{#2}}} 
\opn\chara{char} \opn\length{\ell} \opn\pd{pd} \opn\rk{rk}
\opn\projdim{proj\,dim} \opn\injdim{inj\,dim} \opn\rank{rank}
\opn\depth{depth} \opn\grade{grade} 
\opn\Rees{Rees} \opn\height{height}
\opn\embdim{emb\,dim} \opn\codim{codim}
\opn\Tr{Tr} \opn\bigrank{big\,rank}
\opn\superheight{superheight}\opn\lcm{lcm}
\opn\trdeg{tr\,deg}
	\opn\reg{reg} \opn\lreg{lreg} \opn\ini{in} \opn\lpd{lpd}
	\opn\size{size} \opn\sdepth{sdepth}
	\opn\link{link}\opn\fdepth{fdepth}\opn\lex{lex}
	\opn\tr{tr}
	\opn\type{type}
	\opn\gap{gap}
	\opn\diam{diam}
	\opn\Mod{Mod}
	\opn\revlex{revlex}
	\opn\div{div} \opn\Div{Div} \opn\cl{cl} \opn\Cl{Cl}
	\opn\Spec{Spec} \opn\Supp{Supp} \opn\supp{supp} \opn\Sing{Sing}
	\opn\Ass{Ass} \opn\Min{Min}\opn\Mon{Mon}
	\opn\Ann{Ann} \opn\Rad{Rad} \opn\Soc{Soc}
	\opn\Im{Im} \opn\Ker{Ker} \opn\Coker{Coker} \opn\Am{Am}
	\opn\Hom{Hom} \opn\Tor{Tor} \opn\Ext{Ext} \opn\End{End}
	\opn\Aut{Aut} \opn\id{id}
	\opn\nat{nat}
	\opn\pff{pf}
	\opn\Pf{Pf} \opn\GL{GL} \opn\SL{SL} \opn\mod{mod} \opn\ord{ord}
	\opn\Gin{gin} \opn\Hilb{Hilb}\opn\sort{sort}
	\opn\PF{PF}\opn\Ap{Ap}
	\opn\dist{dist}
	\opn\aff{aff}
	\opn\relint{relint} \opn\st{st}
	\opn\lk{lk} \opn\cn{cn} \opn\core{core} \opn\vol{vol}  \opn\inp{inp} \opn\nilpot{nilpot}
	\opn\link{link} \opn\star{star}\opn\lex{lex}\opn\set{set}
	\opn\width{wd}
	\opn\Fr{F}
	\opn\QF{QF}
	\opn\G{G}
	\opn\type{type}\opn\res{res}
	\opn\conv{conv}
	\opn\sr{sr}
	\opn\gr{gr}
	\def\Rees{{\mathcal R}}
	\def\pot#1#2{#1[\kern-0.28ex[#2]\kern-0.28ex]}
	\opn\dirlim{\underrightarrow{\lim}}
	\opn\inivlim{\underleftarrow{\lim}}
	\let\sect=\cap
	\let\dirsum=\oplus
	\let\iso=\cong
	\let\Dirsum=\bigoplus
	\let\to=\rightarrow
	\let\To=\longrightarrow
	\def\Implies{\ifmmode\Longrightarrow \else
		\unskip${}\Longrightarrow{}$\ignorespaces\fi}
	\def\implies{\ifmmode\Rightarrow \else
		\unskip${}\Rightarrow{}$\ignorespaces\fi}
	\def\iff{\ifmmode\Longleftrightarrow \else
		\unskip${}\Longleftrightarrow{}$\ignorespaces\fi}
	\newtheorem{Theorem}{Theorem}[section]
	\newtheorem{Lemma}[Theorem]{Lemma}
	\newtheorem{Corollary}[Theorem]{Corollary}
	\newtheorem{Proposition}[Theorem]{Proposition}
	\newtheorem{Remarks}[Theorem]{Remarks}
	\newtheorem{Example}[Theorem]{Example}
	\newtheorem{Definition}[Theorem]{Definition}
	\newtheorem{Question}[Theorem]{Question}
	\let\epsilon\varepsilon
	\let\kappa=\varkappa
	\def\qed{\ifhmode\textqed\fi
		\ifmmode\ifinner\hfill\quad\qedsymbol\else\dispqed\fi\fi}
	\def\textqed{\unskip\nobreak\penalty50
		\hskip2em\hbox{}\nobreak\hfill\qedsymbol
		\parfillskip=0pt \finalhyphendemerits=0}
	\def\dispqed{\rlap{\qquad\qedsymbol}}
	\opn\dis{dis}
	\def\pnt{{\raise0.5mm\hbox{\large\bf.}}}
	\opn\Lex{Lex}
	\opn\Max{Max}
	\opn\Shad{Shad}
	\opn\astab{astab}
	\opn\Fitt{Fitt}
	\opn\v{v}
\newcommand{\Sasha}[1]{{\color{violet} \sf $\heartsuit$ [#1]}}
\begin{document}

\title{}
	\author{Oleksandra Gasanova, J\"urgen Herzog, Filip Jonsson Kling, Somayeh Moradi}

 \address{Oleksandra Gasanova, Faculty of Mathematics, University of Duisburg-Essen, 45117 Essen, Germany}
	\email{oleksandra.gasanova@uni-due.de}

	\address{J\"urgen Herzog, Faculty of Mathematics, University of Duisburg-Essen, 45117 Essen, Germany} \email{juergen.herzog@uni-essen.de}

 \address{Filip Jonsson Kling, Department of Mathematics, Stockholm University, 106 91 Stockholm, Sweden}
	\email{filip.jonsson.kling@math.su.se}
	
	\address{Somayeh Moradi, Department of Mathematics, Faculty of Science, Ilam University, P.O.Box 69315-516, Ilam, Iran}
	\email{so.moradi@ilam.ac.ir}
	
	\thanks{Oleksandra Gasanova is supported by the Wenner-Gren Foundation}
	\thanks{Somayeh Moradi is supported by the Alexander von Humboldt Foundation.
	}
	
	\subjclass[2020]{Primary 13A30; Secondary 05E40.}
	
	\keywords{Rees algebra, conductor, ideal of linear type}
	
\title{On the Rees algebra and the conductor of an ideal}
\maketitle
\textit{In memory of Jürgen Herzog, who sadly passed away during
the finalizing of this paper. We will miss his expertise, enthusiasm,
and dedication to our research. This paper is a testament to his impact on our work,
and we hope his memory will continue to inspire us.}

\begin{abstract}
For an ideal $I$ in a Noetherian ring $R$, we introduce and study its conductor as a tool to explore the Rees algebra of $I$. The conductor of $I$ is an ideal $C(I)\subset R$ obtained from the defining ideals of the Rees algebra and the symmetric algebra of $I$ by a colon operation. Using this concept we investigate when adding an element to an ideal preserves the property of being of linear type. In this regard, a generalization of a result by Valla in terms of the conductor ideal is presented.
When the conductor of a graded ideal in a polynomial ring is the graded maximal ideal, a criteria is given for when the Rees algebra and the symmetric algebra have the same Krull dimension.
Finally, noting the fact that the conductor of a monomial ideal is a monomial ideal, the conductor of some families of monomial ideals, namely bounded Veronese ideals and edge ideals of graphs, are determined.  
\end{abstract}


\section*{Introduction}


Let $I=(f_1,\ldots,f_n)$ be an ideal in a Noetherian ring $R$. The Rees algebra of $I$ is the graded $R$-algebra 
$
\mathcal{R}(I) = R[f_1t, \ldots, f_mt] \subset R[t]
$. A natural way to represent the Rees algebra $\mathcal{R}(I)$ is to consider the surjective  $R$-algebra homomorphism $\varphi : T \to \mathcal{R}(I)$ from the polynomial ring 
 $T = R[y_1, \ldots, y_m]$ to $\mathcal{R}(I)$  defined by $\varphi(y_j) = f_jt$ for all $j$.  Let $J=\Ker \varphi$ be the defining ideal of $\mathcal{R}(I)$. Finding the
 defining equations of $\mathcal{R}(I)$ is a challenging problem in general and has been an active area of research in commutative algebra and algebraic geometry for decades.
Some obvious elements in $J$ arise from the relations of $I$. Indeed, let 
\[
\Dirsum_{j=1}^sRg_j\stackrel{\gamma}{\To}\Dirsum_{i=1}^mRe_i\to I\to 0, \quad \text{$e_i\mapsto f_i$ for $i=1,\ldots,m$}
\]
be a free presentation of $I$, and let $\gamma(g_j)=\sum_{i=1}^ma_{ij}e_i$ for $j=1,\ldots,s$.  Then 
\[
\sum_{i=1}^m  a_{ij}y_i\in J \quad \text{for $j=1,\ldots,s$.}
\]
Relations of this type are called {\em linear relations}\index{linear relations}, and $\Rees(I)$ is called of {\em linear type}\index{linear type} if the linear relations are the {\em only} generating relations  of $\Rees(I)$. Complete intersection
ideals and $d$-sequence ideals are some classes of ideals of linear type ~\cite{Huneke}, ~\cite{Micali}. 

We denote by $L\subseteq J$ the ideal generated by the linear relations of $\Rees(I)$.
Let $S(I)$ be the symmetric algebra of $I$. 
There is a natural $R$-algebra epimorphism $\alpha: S(I)\to \mathcal{R}(I)$. Note that the kernel of this map is isomorphic to $J/L$. Therefore, the ideal $I$ is of linear type if and only if this map is an isomorphism. 

In this article we explore when the defining ideal of $\mathcal{R}(I)$ can be obtained from its linear relations by some colon operations. To this aim we consider the ideal $L:_RJ\subset R$ attached to $I$. We denote this ideal by $C(I)$. When $R$ is a domain, for each non-zero element $r\in C(I)$ one gets $J=L:r$, see \Cref{equalJ}. We investigate how the algebraic properties of $C(I)$ affects the Rees algebra $\mathcal{R}(I)$.

The paper proceeds as follows. In Section~1 we study some algebraic properties of $C(I)$. The variety of $C(I)$ is interpreted in terms of the linear type property of localizations of $I$, see~\Cref{variety}. From this description we deduce that $\sqrt{I}\subseteq \sqrt{C(I)}$.
When $I$ is a graded ideal in the polynomial ring $R$ over a field and  $C(I)$ is $\mm$-primary, a criterion for the equality $\dim \Rees(I)=\dim S(I)$ is given in \Cref{equalReesSym}. Here $\mm$ denotes the graded maximal ideal of $R$.  In \Cref{multigrading} it is shown that $C(I)$ inherits the $\ZZ^n$-graded structure from the ideal $I$. This fact is mainly useful when working with monomial ideals, as we see in Section~3.

Motivated by a result of Valla~\cite{Valla}, in Section~2 we ask for the conditions on an element $f\in R$ such that if $I$ is of linear type, then $(I,f)$ is of linear type as well. Note that $I$ is of linear type precisely when $C(I)=R$. So in order to answer the above question, it is natural to compare the ideals $C((I,f))$ and $(C(I),f)$. When $I$ is a monomial ideal, and $f$ is a monomial which is a non-zerodivisor modulo $I$, such comparison is given in \Cref{regularmodI}, from which it is deduced that $I$ is of linear type if and only if $(I,f)$ is of linear type. This result is not true in general if we replace a monomial ideal by a graded ideal. For an element $f$ which is a non-zerodivisor modulo all powers of a given ideal $I$, in \Cref{generalizationvalla} we show that $(C(I), f^k)\subseteq C((I,f))$ for some integer $k>0$. This implies the result proved by Valla, which is the particular case of the above inclusion stating that if $C(I)=R$, then $C((I,f))=R$.

In Section~3 we compute the conductor ideal $C(I)$ for bounded Veronese ideals and edge ideals of graphs, see~\Cref{veroneseConductor} and~\Cref{Graph_conductor}, respectively. The surprising fact is that the conductor of  bounded Veronese ideals are again  bounded Veronese ideals.  Using these characterizations, in \Cref{BVlinear} we determine when such ideals are of linear type.

Finally, Section~4 contains some questions and open problems.
 
\medskip
\section{Relations of $\Rees(I)$ and the conductor ideal of $I$}

Let $R$ be a Noetherian ring, $I$ an ideal in $R$, and let  $\alpha\: S(I)\to \Rees(I)$ be the canonical $R$-module homomorphism.  We let  $J$ and $L$ be the ideals in $T$, defined as before. Then $\Ker{\alpha}=J/L$.

We define the ideal 
$$C(I)=\{r\in R\:\;   J\subseteq L:_Tr\}$$
in $R$,
and call it the {\em conductor} of $I$.
Then
$$C(I)=L:_R J=(L:_T J)\cap R=\Ann_R(\Ker(\alpha)).$$ 

The conductor ideal of $I$ localizes. Indeed, we have

\begin{Lemma}
\label{cilocal}
 Let $S\subseteq R$ be a multiplicatively closed set. Then    \[ S^{-1}C(I)=C(S^{-1}I).
 \]
\end{Lemma}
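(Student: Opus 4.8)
The plan is to reduce the statement to two ingredients: first, that every object used to build $C(I)$ — the Rees algebra, its defining ideal $J$, and the ideal $L$ of linear relations — commutes with the localization $S^{-1}(-)$; and second, the formal facts that the colon operation commutes with localization when the divisor ideal is finitely generated, and that localization commutes with intersections of two submodules of a common module.

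First I would fix notation for the localized situation. Put $R'=S^{-1}R$, $I'=S^{-1}I$ and $T'=S^{-1}T=R'[y_1,\dots,y_m]$, and let $\varphi'\colon T'\to\Rees(I')$, $J'=\Ker\varphi'$, $L'$ be the analogues of $\varphi,J,L$ for $I'$. Since $S^{-1}(I^k)=(I')^k$ for every $k$, there is a canonical graded isomorphism $S^{-1}\Rees(I)\cong\Rees(I')$; applying the exact functor $S^{-1}(-)$ to $0\to J\to T\xrightarrow{\varphi}\Rees(I)\to 0$ then identifies $S^{-1}J$ with $J'$, because the localized composite $T'=S^{-1}T\to S^{-1}\Rees(I)\cong\Rees(I')$ sends $y_j\mapsto(f_j/1)t$ and hence equals $\varphi'$. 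Likewise, localizing the finite free presentation $\bigoplus_j Rg_j\xrightarrow{\gamma}\bigoplus_i Re_i\to I\to 0$ yields a finite free presentation of $I'$ whose matrix is the entrywise localization $(a_{ij}/1)$ of the matrix of $\gamma$, so the linear relations $\sum_i(a_{ij}/1)y_i$ of $I'$ generate $S^{-1}L$ and therefore $L'=S^{-1}L$. (Equivalently, $L=T\cdot J_1$ for the grading $\deg y_i=1$, $\deg R=0$, and localization commutes with taking graded components.)

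With this in place the computation is purely formal. As $R$ is Noetherian, so is $T$ by the Hilbert basis theorem, hence $J$ is a finitely generated $T$-module; therefore localization commutes with the colon, giving $S^{-1}(L:_T J)=S^{-1}L:_{T'}S^{-1}J=L':_{T'}J'$. Since $L:_T J$ and $R$ are both $R$-submodules of $T$ and localization commutes with intersections of two submodules of a common module,
\[
S^{-1}C(I)=S^{-1}\bigl((L:_T J)\cap R\bigr)=S^{-1}(L:_T J)\cap S^{-1}R=(L':_{T'}J')\cap R'=C(I').
\]
One could argue equally well from $C(I)=\Ann_R(\Ker\alpha)$: the $T$-module $\Ker\alpha=J/L$ is finitely generated, $S^{-1}(J/L)=J'/L'$ is by exactness the kernel of the canonical map $S(I')\to\Rees(I')$, and the annihilator of a finitely generated module commutes with localization.

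I expect the argument to be essentially bookkeeping, with the only genuine content in the first step: verifying that $S^{-1}J$ and $S^{-1}L$ really are the defining ideal and the linear-relation ideal of $I'$ over $T'$, rather than merely ideals mapping to them. This rests on the functoriality of the Rees-algebra and free-presentation constructions together with exactness of localization, and it is where the statement acquires its substance; Noetherianity of $R$ enters only to ensure that $J$ (equivalently $J/L$) is finitely generated over $T$, so that the colon (equivalently the annihilator) commutes with localization.
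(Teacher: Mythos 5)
Your proposal is correct and follows essentially the same route as the paper: the paper computes $S^{-1}C(I)$ as $S^{-1}\bigl(\Ann_T(\Ker\alpha)\cap R\bigr)$ using that the annihilator of the finitely generated $T$-module $\Ker\alpha$ commutes with localization and that $\Ker(S^{-1}\alpha)$ is the kernel of the canonical map for $S^{-1}I$, which is exactly your second formulation. Your version merely makes explicit the identifications $S^{-1}J=J'$ and $S^{-1}L=L'$ that the paper leaves implicit; this is sound bookkeeping, not a different argument.
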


\begin{proof}
We have $C(I)=\Ann_T(\Ker(\alpha))\sect R$. Since $\Ann_T(\Ker(\alpha))$ is a finitely generated $T$-module, localization commutes with taking the annihilator, and we get
\begin{eqnarray*}
S^{-1}C(I)&=& S^{-1}(\Ann_T(\Ker(\alpha))\sect R)= S^{-1}\Ann_T(\Ker(\alpha))\sect S^{-1}R\\
&=& \Ann_{S^{-1}T}(S^{-1}\Ker(\alpha))\sect S^{-1}R=\Ann_{S^{-1}T}(\Ker(S^{-1}\alpha))\sect S^{-1}R\\
&=&C(S^{-1}I).
\end{eqnarray*}
\end{proof}

As a first application we obtain

\begin{Proposition}\label{equalJ} 
Let $R$  be a domain and $I\subset R$ be a non-zero ideal. Then 
\begin{enumerate}
    \item[(a)] $C(I)\neq 0$.

    \item[(b)] $L:r=J$ for all $r\in C(I)\setminus \{0\}$. 
\end{enumerate} \end{Proposition}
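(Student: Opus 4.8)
The plan is to deduce both statements from facts already in hand: part (a) from the localization property \Cref{cilocal}, and part (b) from the fact that $\Rees(I)$ is a domain when $R$ is.

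\smallskip
\noindent\textit{Part (a).} First I would apply \Cref{cilocal} with the multiplicatively closed set $S=R\setminus\{0\}$, so that $S^{-1}R=K$ is the field of fractions of $R$. This gives $S^{-1}C(I)=C(S^{-1}I)=C(IK)$. Since $I\neq 0$ and $K$ is a field, $IK=K$ is the unit ideal of $K$; as it is principal and generated by a nonzerodivisor, it is of linear type, so the canonical map $S(IK)\to\Rees(IK)$ is an isomorphism and $C(IK)=\Ann_K(0)=K$. Hence $S^{-1}C(I)=K\neq 0$, which is impossible if $C(I)=0$; therefore $C(I)\neq 0$.

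\smallskip
\noindent\textit{Part (b).} Now fix $r\in C(I)\setminus\{0\}$. From $C(I)=L:_R J$ we get $rJ\subseteq L$, hence $J\subseteq L:_T r$. For the reverse inclusion, let $h\in T$ with $rh\in L$. Since $L\subseteq J=\Ker\varphi$ and $\varphi$ is an $R$-algebra homomorphism fixing $R$, applying $\varphi$ yields $r\,\varphi(h)=\varphi(rh)=0$ in $\Rees(I)$. As $R$ is a domain, $\Rees(I)\subseteq R[t]$ is a domain and $r\neq 0$, so $\varphi(h)=0$, i.e.\ $h\in J$. Thus $L:_T r=J$.

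\smallskip
I do not expect a serious obstacle here. The two points that must be handled with a little care are: in part (a), that $IK$ is genuinely the \emph{unit} ideal of $K$ (this uses $I\neq 0$), so that $C(IK)=K$; and in part (b), that the domain property of $\Rees(I)$ is precisely what forces the colon ideal $L:_T r$ to collapse down to $J$ rather than being strictly larger, via injectivity of multiplication by the nonzero element $r$ on $\Rees(I)$.
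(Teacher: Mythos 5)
Your proposal is correct and follows essentially the same route as the paper: part (a) is the same localization argument at $S=R\setminus\{0\}$ reducing to $C(Q(R))=Q(R)$, and part (b) is the same use of the fact that $\Rees(I)$ is a domain (your cancellation of $r$ after applying $\varphi$ is just the paper's observation that $J$ is prime and $r\notin J$, phrased differently).
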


\begin{proof}
(a) Suppose $C(I)=0$, and let $S=R\setminus\{0\}$. Then 
\[
0=S^{-1}C(I)=C(S^{-1}(I))=C(Q(R))=Q(R),
\]
where $Q(R)$ is the field of fractions of $R$, giving a contradiction. 

(b) Suppose $f\in L:r$ for some $f\in T$. Then $fr\in L\subseteq J$. Our assumption implies that $\Rees(I)$ is a domain. Therefore, $J$ is a prime ideal, and hence  $f\in J$ because  $r\not\in  J$. This implies that $L:r\subseteq J$. The other inclusion holds since $r\in C(I)$.
\end{proof}

Another consequence of \Cref{cilocal} is 

\begin{Lemma}\label{variety}
The set $\mathcal{L}(I)=\{P\in \Spec(R): I_P \textrm{ is of linear type}\}$ is an open set in $\Spec(R)$. More precisely, $\mathcal{L}(I)=\Spec(R)\setminus V(C(I))$.
\end{Lemma}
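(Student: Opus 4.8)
The plan is to reduce the statement to the localization formula $S^{-1}C(I)=C(S^{-1}I)$ of \Cref{cilocal}, combined with the elementary remark that an ideal is of linear type exactly when its conductor is the whole ring. First I would record this remark precisely: since $C(I)=L:_RJ$ and $L\subseteq J$ always holds, we have $C(I)=R$ if and only if $1\in L:_TJ$, that is $J\subseteq L$, that is $J=L$, that is $\Ker(\alpha)=J/L=0$, that is $\alpha$ is an isomorphism --- which is by definition the condition that $I$ be of linear type.

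Next, fix a prime $P\in\Spec(R)$ and apply this criterion to the ideal $I_P\subseteq R_P$: it is of linear type if and only if $C(I_P)=R_P$. (Localizing a free presentation of $I$ yields one of $I_P$, and localizing $\alpha$ yields $\alpha_P$, so that $C(I_P)$ is the conductor in the sense already defined; this is exactly the computation carried out in the proof of \Cref{cilocal}.) By \Cref{cilocal} applied to the multiplicatively closed set $S=R\setminus P$ we get $C(I_P)=C(I)_P$. Hence $P\in\mathcal{L}(I)$ if and only if $C(I)_P=R_P$. Now I would invoke the standard fact that for an ideal $\aa\subseteq R$ and a prime $P$ one has $\aa_P=R_P$ if and only if $\aa\not\subseteq P$: if some element of $\aa$ lies outside $P$ it becomes a unit in $R_P$, and conversely if $\aa\subseteq P$ then $\aa_P\subseteq PR_P\subsetneq R_P$. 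Applying this with $\aa=C(I)$ gives $P\in\mathcal{L}(I)\iff C(I)\not\subseteq P\iff P\notin V(C(I))$.

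Therefore $\mathcal{L}(I)=\Spec(R)\setminus V(C(I))$, which is open because $V(C(I))$ is Zariski-closed; the degenerate case $C(I)=R$ (so $V(C(I))=\emptyset$ and $\mathcal{L}(I)=\Spec(R)$) is covered uniformly by the same chain of equivalences. I do not anticipate a genuine obstacle: the only points needing attention are applying \Cref{cilocal} in the correct direction and using that $C(I)$ is honestly an ideal of $R$ (so that $V(C(I))$ is defined), both of which are already established in the preceding text.
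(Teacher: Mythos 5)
Your proof is correct and follows exactly the route the paper intends: the lemma is stated there as an immediate consequence of \Cref{cilocal} (with the proof left implicit), and your chain of equivalences --- $I_P$ of linear type $\iff C(I_P)=R_P \iff C(I)_P=R_P \iff C(I)\not\subseteq P$ --- is precisely that intended derivation, with the preliminary observation that linear type is equivalent to $C(I)=R$ also matching the paper's own remark in Section~2.
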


\begin{Corollary}\label{radical}
Let $I\subset R$ be an ideal in a Noetherian ring. Then $\sqrt{I}\subseteq \sqrt{C(I)}$. 
\end{Corollary}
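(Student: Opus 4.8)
The plan is to translate the desired radical containment into an inclusion of closed subsets of $\Spec(R)$ and then read it off from \Cref{variety}. Recall that $\sqrt{I}\subseteq\sqrt{C(I)}$ is equivalent to $V(C(I))\subseteq V(I)$, i.e.\ to the statement that every prime $P$ with $C(I)\subseteq P$ also contains $I$. Contrapositively, it suffices to show that if $I\not\subseteq P$, then $C(I)\not\subseteq P$, which by \Cref{variety} (since $\mathcal{L}(I)=\Spec(R)\setminus V(C(I))$) amounts to showing that $I_P$ is of linear type whenever $I\not\subseteq P$.

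So the first step is the observation that if $I\not\subseteq P$, then $I$ contains an element outside $P$, which becomes a unit in $R_P$, and hence $I_P=R_P$. The second step is to note that the unit ideal is of linear type: the Rees algebra of $R_P$ is $R_P[t]$, the symmetric algebra $S(R_P)$ is the polynomial ring $R_P[y]$ in one variable, and the canonical map $\alpha\colon S(R_P)\to\Rees(R_P)$ sending $y\mapsto t$ is an isomorphism; equivalently, any ideal generated by a single element is of linear type, and $L=J$ in this case. Consequently $P\in\mathcal{L}(I)$, i.e.\ $C(I)\not\subseteq P$.

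Putting these together: given $P\in V(C(I))$, \Cref{variety} tells us $I_P$ is \emph{not} of linear type, so by the previous paragraph $I_P\neq R_P$, which forces $I\subseteq P$. Hence $V(C(I))\subseteq V(I)$ and therefore $\sqrt{I}\subseteq\sqrt{C(I)}$.

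There is essentially no serious obstacle here once \Cref{variety} (and behind it \Cref{cilocal}, the fact that the conductor localizes) is available; the only point requiring a moment's care is the elementary remark that the unit ideal — equivalently, a principal ideal — is of linear type, which is what makes every prime not containing $I$ land in the open set $\mathcal{L}(I)$.
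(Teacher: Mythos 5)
Your proof is correct and follows essentially the same route as the paper: reduce to $V(C(I))\subseteq V(I)$, localize at a prime $P$ not containing $I$ so that $I_P=R_P$, and conclude via \Cref{cilocal} that the localized conductor is the unit ideal. The only cosmetic difference is that you phrase the last step through \Cref{variety} and the remark that the unit ideal is of linear type, whereas the paper computes $C(I)_P=C(R_P)=R_P$ directly.
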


\begin{proof}
It is enough to show that $V(C(I))\subseteq V(I)$. Suppose this is not the case. Then there exists a prime ideal $P\in V(C(I))\setminus V(I)$ and we obtain $R_P\neq C(I)_P=C(I_P)=C(R_P)=R_P[t]\cap R_P=R_P$, a contradiction. 
\end{proof}

\begin{Corollary}
 Let $R$ be a regular ring,  and  let $I\subsetneq  R$ be  a radical ideal. Then $I\neq C(I)$.
 In particular, if $I$ is a squarefree monomial ideal,  then $I\neq C(I)$.
\end{Corollary}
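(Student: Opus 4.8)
The plan is to argue by contradiction, reducing via localization at a minimal prime of $I$ to the case of a regular local ring, where the radical ideal $I$ becomes the maximal ideal and is therefore a complete intersection. Suppose $C(I)=I$. Since $I\subsetneq R$, the set $V(I)$ is nonempty, so we may choose a prime $P$ minimal over $I$. Applying \Cref{cilocal} gives
\[
I_P=C(I)_P=C(I_P).
\]

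Next I would identify $I_P$ explicitly. Localization preserves radical ideals, so $I_P$ is radical in $R_P$; and since $P$ is minimal over $I$, the only prime of $R_P$ containing $I_P$ is the maximal ideal $PR_P$. Hence $I_P=\sqrt{I_P}=PR_P$. Now I invoke regularity: as $R$ is regular, $R_P$ is a regular local ring, so $PR_P$ is generated by a regular sequence and therefore $I_P$ is a complete intersection; since complete intersection ideals are of linear type (see the Introduction), we get $C(I_P)=R_P$. Combining the three displayed identities yields $PR_P=I_P=C(I_P)=R_P$, which is absurd. For the ``in particular'' clause, a squarefree monomial ideal $I\subsetneq R$ in a polynomial ring over a field is a proper radical ideal and $R$ is regular, so the general statement applies.

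There is no deep obstacle here; the only point requiring care is the identification $I_P=PR_P$, which uses both that localization commutes with taking radicals and the minimality of $P$ over $I$, together with a check of the degenerate case $\dim R_P=0$: there $PR_P=0$, but the argument still closes because then $I_P=0\neq R_P$.
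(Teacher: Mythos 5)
Your proof is correct and follows essentially the same route as the paper: localize at a minimal prime $P$ of $I$, use that $I$ radical forces $I_P=PR_P$, that regularity of $R_P$ makes $PR_P$ a complete intersection and hence of linear type so $C(I_P)=R_P$, and derive the contradiction $PR_P=R_P$ via \Cref{cilocal}. The extra checks you add (localization preserving radicals, the case $\dim R_P=0$) are fine but not needed beyond what the paper already does implicitly.
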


\begin{proof}
Let $P$ be a minimal prime ideal of $I$. Then $I_P=PR_P$,  because  $I$ is a radical ideal. By definition the ring $R_P$ is regular. Since $R_P$ is regular, $PR_P$ is generated by a regular sequence. 
Suppose that $I=C(I)$, then
 $
 I_P=C(I)_P=C(I_P)=R_P,
 $
 a contradiction. 
\end{proof}
 
\begin{Corollary}
Let $R$ be a Noetherian ring,  and let $I\subset R$ be a proper ideal. Let $P\in V(I)$ such that $\mu(I_P)>\height P$, where $\mu(I_P)$ is the number of minimal generators of $I_P$. Then $C(I)\subseteq P$.   
\end{Corollary}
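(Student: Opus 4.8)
The plan is to combine \Cref{variety} with a standard dimension count for the special fiber of a linear-type ideal. By \Cref{variety}, the containment $C(I)\subseteq P$ is equivalent to $P\in V(C(I))$, that is, to the statement that $I_P$ is \emph{not} of linear type. So it suffices to prove the contrapositive: if $I_P$ is of linear type, then $\mu(I_P)\le\height P$.

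To prove this I would pass to the Noetherian local ring $A=R_P$, with maximal ideal $\mm=PR_P$, residue field $\kappa=A/\mm$, and $\dim A=\height P$, and set $\aa=I_P$, which is a proper ideal of $A$ because $I\subseteq P$. Fix a \emph{minimal} generating set $f_1,\dots,f_n$ of $\aa$, so that $n=\mu(\aa)=\mu(I_P)$, and let $T=A[y_1,\dots,y_n]\to\Rees(\aa)$, $y_i\mapsto f_it$, be the presentation of the Rees algebra, with kernel $J$ and with $L\subseteq J$ the ideal generated by the linear relations, as above. Since $f_1,\dots,f_n$ is a minimal generating set, the degree-one part of $J$ --- which is the syzygy module of $f_1,\dots,f_n$ --- is contained in $\mm T$, because the syzygies of a minimal generating set lie in $\mm A^n$; hence $L\subseteq\mm T$. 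Consequently, if $\aa$ is of linear type, then $J=L\subseteq\mm T$, and therefore
\[
\Rees(\aa)\otimes_A\kappa=T/(J+\mm T)=T/\mm T=\kappa[y_1,\dots,y_n],
\]
a polynomial ring in $n$ variables over $\kappa$, of Krull dimension $n=\mu(I_P)$.

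It then remains to bound $\dim\bigl(\Rees(\aa)\otimes_A\kappa\bigr)$ from above by $\dim A$. This is standard: $\Rees(\aa)\otimes_A\kappa=\Rees(\aa)/\mm\Rees(\aa)$ is a graded quotient of $\Rees(\aa)/\aa\Rees(\aa)=\gr_{\aa}(A)$, the surjection coming from $\aa\subseteq\mm$, and $\dim\gr_{\aa}(A)=\dim A$ for the proper ideal $\aa$ (equivalently, the analytic spread of a proper ideal is at most the dimension of the local ring). Combining the two paragraphs gives $\mu(I_P)\le\dim A=\height P$, which is exactly the contrapositive we wanted; hence $\mu(I_P)>\height P$ forces $I_P$ not to be of linear type, i.e.\ $P\in V(C(I))$, so $C(I)\subseteq P$. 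I do not expect a real obstacle here: the only points that need care are that one must work with a \emph{minimal} generating set of $I_P$ --- this is precisely what yields $L\subseteq\mm T$ and turns the special fiber of a linear-type ideal into a polynomial ring --- and that $I_P$ is a proper ideal of $R_P$, which is guaranteed by $P\supseteq I$.
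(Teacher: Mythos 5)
Your proof is correct, and its first half is exactly the paper's reduction: the paper also argues by contrapositive, using \Cref{cilocal} to see that $C(I)\nsubseteq P$ forces $C(I_P)=R_P$, i.e.\ $I_P$ of linear type (your appeal to \Cref{variety} is the same step in different clothing). The difference is in the second half: the paper simply cites \cite[Proposition~2.4]{HSV} for the implication ``$I_P$ of linear type $\Rightarrow$ $\mu(I_P)\le\height P$,'' whereas you prove it. Your argument for that implication is sound and is essentially the standard proof of the cited result: minimality of the generating set gives $L\subseteq\mm T$ (every syzygy of a minimal generating set has coefficients in $\mm$), linear type gives $J=L$, hence the special fiber $\Rees(I_P)\otimes\kappa=T/(J+\mm T)$ is a polynomial ring in $\mu(I_P)$ variables; and since $I_P\subseteq\mm$ this fiber is a quotient of $\gr_{I_P}(R_P)$, whose dimension equals $\dim R_P=\height P$. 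What your route buys is self-containedness --- the corollary no longer rests on an external reference --- at the cost of invoking the (also standard, but nontrivial) facts that $\dim\gr_{\aa}(A)=\dim A$ for a proper ideal of a Noetherian local ring and that the fiber ring dimension is bounded by it; one could reasonably keep the citation instead. No gaps.
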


\begin{proof}
Suppose that $C(I)\nsubseteq P$. Then by~\Cref{cilocal}, $C(I_P)=C(I)_P=R_P$. Hence $I_P$ is of linear type. So \cite[Proposition~2.4]{HSV} implies  that $\mu(I_P)\leq \height P$. 
\end{proof}

\begin{Proposition}\label{equalReesSym}
Let $R$ be a polynomial ring over a field $K$ with the graded maximal ideal $\mm$. Let  $I\subset R$ be a graded ideal for which $C(I)$ is $\mm$-primary. Then $\dim \Rees(I)=\dim S(I)$ if and only if $\mu(I)\leq \dim R+1$.   
\end{Proposition}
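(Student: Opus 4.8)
The plan is to compute $\dim S(I)$ via the Huneke--Rossi formula for the dimension of a symmetric algebra, and to use \Cref{variety} to reduce the relevant supremum to a single prime, namely $\mm$. First, some reductions. Since $C(I)$ is $\mm$-primary it is a proper ideal, so $I$ is not of linear type; in particular $I\neq 0$ and $I\neq R$. As $R$ is a polynomial ring over a field it is a Noetherian domain, hence $I_\pp\neq 0$ for every $\pp\in\Spec R$, so $\dim\Rees(I)=\dim R+1$. Moreover $\Rees(I)$ is a quotient of $S(I)$, so $\dim S(I)\ge\dim R+1$ as well. Thus it suffices to prove $\dim S(I)=\max\{\dim R+1,\ \mu(I)\}$, for then $\dim\Rees(I)=\dim S(I)$ exactly when $\mu(I)\le\dim R+1$.

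By the Huneke--Rossi formula, $\dim S(I)=\sup_{\pp\in\Spec R}\bigl(\dim R/\pp+\mu(I_\pp)\bigr)$, where $\mu(I_\pp)$ denotes the minimal number of generators of $I_\pp$ over $R_\pp$. Since $R$ is a polynomial ring over a field, $\dim R/\pp=\dim R-\height\pp$ for every prime $\pp$. Fix $\pp\neq\mm$. Because $C(I)$ is $\mm$-primary we have $V(C(I))=\{\mm\}$, so $\pp\notin V(C(I))$, and \Cref{variety} gives that $I_\pp$ is of linear type, i.e.\ $S(I_\pp)=\Rees(I_\pp)$. Applying the Huneke--Rossi formula to $S(I_\pp)$ and reading off the summand at the maximal ideal of $R_\pp$,
\[
\mu(I_\pp)\le\dim S(I_\pp)=\dim\Rees(I_\pp)\le\dim R_\pp+1=\height\pp+1 ,
\]
the second inequality being simply $\Rees(I_\pp)\subseteq R_\pp[t]$. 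Hence $\dim R/\pp+\mu(I_\pp)\le(\dim R-\height\pp)+(\height\pp+1)=\dim R+1$ for all $\pp\neq\mm$. For $\pp=\mm$ the summand is $\dim R/\mm+\mu(I_\mm)=\mu(I)$, since $\mm$ is the graded maximal ideal and $I$ is graded, so $\mu(I_\mm)=\mu(I)$. Therefore $\dim S(I)\le\max\{\dim R+1,\ \mu(I)\}$; combined with the lower bounds $\dim S(I)\ge\dim R+1$ and $\dim S(I)\ge\mu(I)$ (the latter being the $\mm$-summand), this yields $\dim S(I)=\max\{\dim R+1,\ \mu(I)\}$, as required.

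I expect no serious obstacle here: once the Huneke--Rossi formula is invoked, the rest is bookkeeping. The only real content is the translation, via \Cref{variety}, of ``$C(I)$ is $\mm$-primary'' into ``$I_\pp$ is of linear type for every $\pp\neq\mm$'', together with the trivial bound $\dim\Rees(J)\le\dim A+1$ for an ideal $J$ of a Noetherian ring $A$ and the equidimensionality $\dim R/\pp=\dim R-\height\pp$ of the polynomial ring $R$. A minor point to keep in mind is that at a prime $\pp$ not containing $I$ one has $\mu(I_\pp)=1$, but this is already covered by the argument above since $I_\pp=R_\pp$ is trivially of linear type.
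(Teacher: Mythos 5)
Your proof is correct, but it takes a genuinely different route from the paper's. The paper works with the exact sequence $0\to J/L\to S(I)\to\Rees(I)\to 0$: since $C(I)$ annihilates $J/L$ and is $\mm$-primary, $\dim J/L\le\dim T/C(I)T=\mu(I)$, which gives the ``if'' direction, while the ``only if'' direction is delegated to \cite[Proposition~8.1]{HSV} applied at $P=\mm$. You instead invoke the Huneke--Rossi dimension formula $\dim S(I)=\sup_{\pp}\bigl(\dim R/\pp+\mu(I_\pp)\bigr)$ and use \Cref{variety} to translate the $\mm$-primariness of $C(I)$ into ``$I_\pp$ is of linear type for all $\pp\ne\mm$'', which caps every summand at $\pp\ne\mm$ by $\dim R+1$ and leaves only the $\mm$-summand $\mu(I)$; this yields the exact value $\dim S(I)=\max\{\dim R+1,\mu(I)\}$, from which both implications drop out at once. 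Your argument is more uniform, proves slightly more (the exact dimension formula rather than just the comparison), and supplies the justification, left implicit in the paper, that $\dim\Rees(I)=\dim R+1$ requires $I\ne 0$. The trade-off is that it leans on Huneke--Rossi, a heavier external input than the paper's elementary annihilator bound, and it uses $C(I)$ only through its variety rather than through the identity $C(I)\cdot(J/L)=0$. One cosmetic point: the inequality $\dim\Rees(I_\pp)\le\dim R_\pp+1$ does not follow ``simply'' from $\Rees(I_\pp)\subseteq R_\pp[t]$, since Krull dimension is not monotone under passage to subrings; it is, however, the standard fact that $\dim\Rees(J)\le\dim A+1$ for an ideal $J$ of a finite-dimensional Noetherian ring $A$ (here even with equality, as $R_\pp$ is a domain and $I_\pp\ne 0$), so no actual gap results.
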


\begin{proof}
The exact sequence 
\[
0\to J/L\To S(I)\To \Rees(I)\To 0
\]
of $T$-modules implies that $\dim S(I)=\max\{\dim R+1, \dim J/L\}$. Here we used that $\dim \Rees(I)=\dim R+1$. Since the ideal $C(I)\subseteq R$ annihilates $J/L$, it follows that $\dim J/L\leq \dim T/C(I)T$. Now
$T/C(I)T\iso (R/C(I))[y_1,\dots,y_m]$,  where $m=\mu(I)$. By assumption, $\dim R/C(I)=0$. Therefore, $\dim T/C(I)T=\mu(I)$. Hence, if $\mu(I)\leq \dim R+1$,  then $\dim S(I)=\dim \Rees(I)$. 

Conversely, assume that $\dim S(I)=\dim \Rees(I)$. Then \cite[Proposition~8.1]{HSV} implies that $\mu(I_P)\leq \height P+1$ for all $P\in \Spec(R)$. For $P=\mm$, we obtain $\mu(I)\leq \dim R+1$, as desired.
\end{proof}

\begin{Remarks}
{\em 
(a) If $I$ is $\mm$-primary, then by \Cref{radical}, $C(I)$ is also $\mm$-primary. But the converse is not true in general. Indeed, for $I=(x_1^6,x_1x_2^5,x_2^2x_3^4,x_2^3x_3^3,x_4)$, one can see that $C(I)$ is $\mm$-primary, while $I$ is not.

(b) The equality $\dim \Rees(I)=\dim S(I)$ does not necessarily imply that $I$ is of linear type. Consider the ideal $I=(x_1^2,x_1x_2,x_2^2)$. Then $I$ is not of linear type. We have $C(I)=(x_1,x_2)$ and $\mu(I)=\dim R+1=3$. Therefore, by \Cref{equalReesSym}, $\dim \Rees(I)=\dim S(I)=3$.

(c) Let $(R,\mm)$ be a Noetherian local ring, or a finitely generated graded $K$-algebra which is Cohen-Macaulay and $I\subset R$ is a (graded) ideal which contains a non-zerodivisor. If we replace the polynomial ring in \Cref{equalReesSym} by any such ring, then \Cref{equalReesSym} remains valid. 
}
\end{Remarks}

\begin{Lemma}\label{mgradedIdeal}
Let $R$ be a $\ZZ^n$-graded ring, $M$ be a $\ZZ^n$-graded $R$-module and $N$ be a $\ZZ^n$-graded submodule of $M$. Then $N:_RM$ is a $\ZZ^n$-graded ideal of $R$.
\end{Lemma}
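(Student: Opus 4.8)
The plan is to reduce the assertion to a pointwise check on homogeneous elements of $M$ and then to apply the defining property of a graded submodule directly. First I would record the identity
\[
N:_RM=\Sect_{x}\;\{r\in R\; :\; rx\in N\},
\]
where $x$ ranges over all homogeneous elements of $M$. This holds because every element of $M$ is a finite sum of homogeneous elements, so the condition $rM\subseteq N$ is equivalent to $rx\in N$ for every homogeneous $x\in M$. Since an intersection of $\ZZ^n$-graded ideals of $R$ is again $\ZZ^n$-graded, it suffices to prove that for each fixed homogeneous $x\in M$ the ideal $A_x:=\{r\in R\; :\; rx\in N\}$ is $\ZZ^n$-graded.

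So I would fix a homogeneous element $x\in M$, say of degree $b\in\ZZ^n$, and take $r\in A_x$. Writing $r=\sum_{a\in\ZZ^n}r_a$ with $r_a\in R_a$ and only finitely many $r_a$ nonzero, we get $rx=\sum_{a}r_ax$ with $r_ax\in M_{a+b}$. As the translation $a\mapsto a+b$ is injective, $r_ax$ is exactly the homogeneous component of $rx$ in degree $a+b$. Since $N$ is a $\ZZ^n$-graded submodule of $M$ and $rx\in N$, every homogeneous component of $rx$ lies in $N$; hence $r_ax\in N$, that is, $r_a\in A_x$, for every $a$. This shows that $A_x$ is $\ZZ^n$-graded, and combining with the first step completes the proof.

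I do not expect a genuine obstacle here: the argument is a routine verification. The only point that deserves a moment of care is the reduction in the first step, where one uses that $M$ need not be finitely generated but is nonetheless spanned by its homogeneous elements, so that membership of $rM$ in $N$ may be tested on homogeneous elements alone; everything else follows from the injectivity of degree translation together with the fact that a graded submodule contains the homogeneous components of each of its elements.
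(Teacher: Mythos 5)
Your proof is correct and follows essentially the same route as the paper's: decompose $r$ into its homogeneous components, multiply against a homogeneous element (respectively, a graded piece $M_b$) of $M$, and invoke the fact that a $\ZZ^n$-graded submodule contains the homogeneous components of each of its elements. The only cosmetic difference is that you package the reduction as an intersection $\Sect_x A_x$ over homogeneous $x\in M$, whereas the paper works directly with the graded pieces $M_b$; the substance is identical.
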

\begin{proof}
Consider an element $r\in N:_RM$. Then  $r=\sum_{a\in \ZZ^n} r_a$ such that $r_a\in R_a$ and $r_a=0$, except for finitely many $a$. We need to show that $r_a\in N:_R M$ for all $a$. Since $M$ is $\ZZ^n$-graded, it is enough to show that for any $b\in \ZZ^n$, $r_aM_b\subseteq N$. Note that $rM_b\subseteq rM\subseteq N$. Thus $\sum_{a\in \ZZ^n} r_am\in N$ for any $m\in M_b$. By the assumption on $N$ we conclude that $r_am\in N$ for all $m\in M_b$. Hence $r_aM_b\subseteq N$, as desired.   
\end{proof}

\begin{Proposition}\label{multigrading}
Let $R$ be a $\ZZ^n$-graded Noetherian ring. If $I\subset R$ is a  $\ZZ^n$-graded ideal, then $C(I)$ is a  $\ZZ^n$-graded ideal. 
\end{Proposition}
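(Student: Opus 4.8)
The plan is to put compatible $\ZZ^n$-gradings on $T$, on $S(I)$ and on $\Rees(I)$ so that $\varphi$ and $\alpha$ become graded homomorphisms, and then to read the statement off from \Cref{mgradedIdeal}. Recall from the beginning of this section that $C(I)=\Ann_R(\Ker\alpha)=0:_R(J/L)$. Hence, once we know that $\Ker\alpha=J/L$ is a $\ZZ^n$-graded $R$-module, \Cref{mgradedIdeal} applied with $M=J/L$ and $N=0$ (the zero submodule is trivially graded) yields at once that $C(I)$ is a $\ZZ^n$-graded ideal. Since $S(I)$, $\Rees(I)$ and $\alpha$ depend only on $I$, we are free to fix a homogeneous system of generators $f_1,\dots,f_m$ of $I$, with $\deg f_i=a_i\in\ZZ^n$, and to compute with it.

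First I would grade $T=R[y_1,\dots,y_m]$ by declaring $\deg y_i=a_i$ while retaining the given grading on $R$, so that $r\,y_1^{c_1}\cdots y_m^{c_m}$ with $r\in R_b$ lies in degree $b+\sum_i c_ia_i$. Next, $\Rees(I)=\Dirsum_{k\ge 0}I^kt^k\subseteq R[t]$ becomes a $\ZZ^n$-graded ring via $\Rees(I)_a=\Dirsum_{k\ge 0}(I^k)_a\,t^k$: this makes sense because each $I^k$ is a $\ZZ^n$-graded submodule of $R$, and it is multiplicative because $(I^k)_a\cdot(I^\ell)_b\subseteq(I^{k+\ell})_{a+b}$. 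With these choices the map $\varphi\colon T\to\Rees(I)$, $y_i\mapsto f_it$, sends $y_i$ to the homogeneous element $f_it\in(I)_{a_i}t$ of degree $a_i$ and restricts to the identity on $R$, hence is a $\ZZ^n$-graded ring homomorphism, so $J=\Ker\varphi$ is a $\ZZ^n$-graded ideal of $T$.

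It remains to check that $L$ is $\ZZ^n$-graded. For this I would begin from a graded free presentation: the map $\Dirsum_{i=1}^m R(-a_i)\To I$, $e_i\mapsto f_i$, is graded of degree $0$ and surjective, so its kernel is a $\ZZ^n$-graded submodule and admits homogeneous generators $g_1,\dots,g_s$ with $\gamma(g_j)=\sum_i a_{ij}e_i$ homogeneous, say of degree $b_j$; thus $a_{ij}\in R_{b_j-a_i}$. Then the linear relation $\sum_i a_{ij}y_i\in T$ is homogeneous of degree $b_j$, since its $i$-th term has degree $(b_j-a_i)+a_i=b_j$. Hence $L$ is a $\ZZ^n$-graded ideal of $T$, the quotient $S(I)=T/L$ is $\ZZ^n$-graded, and $\alpha\colon T/L\to T/J$ is the graded map induced by $\id_T$. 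Therefore $\Ker\alpha=J/L$ is a $\ZZ^n$-graded $T$-module, and by restriction of scalars along the graded inclusion $R\hookrightarrow T$ it is a $\ZZ^n$-graded $R$-module; \Cref{mgradedIdeal} then finishes the proof as explained above.

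The only substantive point beyond bookkeeping is verifying that the structure maps respect the gradings — in particular that $\Rees(I)$ carries the stated $\ZZ^n$-grading and that the linear relations may be taken homogeneous. Both are secured by choosing all the initial data (generators of $I$, the free presentation) homogeneous, which is possible exactly because $I$ is $\ZZ^n$-graded; after that the argument is routine.
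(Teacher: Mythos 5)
Your proposal is correct and follows essentially the same route as the paper: equip $T=R[y_1,\dots,y_m]$ with the $\ZZ^n$-grading given by $\deg y_i=\deg f_i$, observe that $J$ and $L$ are then $\ZZ^n$-graded, and invoke \Cref{mgradedIdeal} (you apply it to $M=J/L$, $N=0$ via $C(I)=\Ann_R(J/L)$, the paper to $M=J$, $N=L$ via $C(I)=L:_RJ$ — the same thing). Your justification that $J$ is graded as the kernel of a graded ring homomorphism onto a suitably graded $\Rees(I)$, and that $L$ is graded because the syzygy module admits homogeneous generators, is in fact more careful than the paper's one-line assertion that $L$ and $J$ are ``generated by graded binomials,'' which is only literally accurate in the monomial setting.
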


\begin{proof}
Let $\{f_1,\ldots,f_m\}$ be a generating set of $I$ consisting of $\ZZ^n$-graded elements.  
In view of \Cref{mgradedIdeal}, it is enough to show that there exists a $\ZZ^n$-grading on the polynomial ring $T = R[y_1, \ldots, y_m]$ such that $L$ and $J$ are $\ZZ^n$-graded $R$-submodules of $T$.
Consider a $\ZZ^{n}$-grading on $T$ which is induced by the $\ZZ^{n}$-grading on $R$ together with $\deg(y_i)=\deg(f_i)$ for all $i$. In other words, for $a\in\ZZ^n$,  an element of $T_a$ is of the form $ry_1^{t_1}\cdots y_m^{t_m}$ where $r\in R_b$ for some $b\in\ZZ^n$ with $b+\sum_{i=1}^m t_i\deg(f_i)=a$. Then $T$ viewed as an $R$-module is $\ZZ^n$-graded with respect to this grading, and the ideals $L,J\subset T$ are $\ZZ^n$-graded $R$-submodules of $T$, since they are generated by $\ZZ^n$-graded binomials. Therefore $C(I)=L:_R J$ is a $\ZZ^{n}$-graded ideal of $R$.  
\end{proof}

\begin{Corollary}\label{CIisMon}
If $R$ is a polynomial ring and $I$ is a monomial ideal, then $C(I)$ is a monomial ideal, and if $R$ is a graded ring and $I$ is a graded ideal in $R$, then $C(I)$ is a graded ideal.
\end{Corollary}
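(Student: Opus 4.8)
The plan is to deduce both assertions directly from \Cref{multigrading} by choosing the appropriate grading group, so the only real content is identifying the right $\ZZ^n$-grading in each case and observing what ``$\ZZ^n$-graded ideal'' means there.

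First I would handle the monomial case. Take $R=K[x_1,\dots,x_n]$ equipped with its fine $\ZZ^n$-grading, where $\deg(x_i)=e_i$ is the $i$-th standard basis vector; then $R$ is a $\ZZ^n$-graded Noetherian ring and $R_a=K\cdot x^a$ for $a\in\ZZ_{\geq 0}^n$ and $R_a=0$ otherwise. A monomial ideal $I$ is $\ZZ^n$-graded with respect to this grading, so \Cref{multigrading} applies and $C(I)$ is a $\ZZ^n$-graded ideal. The key observation is that, because each graded component $R_a$ is at most one-dimensional and spanned by a monomial, a $\ZZ^n$-graded ideal of $R$ is automatically generated by monomials: it is the sum of its homogeneous components, each of which is either $0$ or $K\cdot x^a$ for some $a$. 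Hence $C(I)$ is a monomial ideal.

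Next I would treat the graded-ring case, which is simply the $n=1$ instance of \Cref{multigrading}. If $R$ is a $\ZZ$-graded (Noetherian) ring and $I\subset R$ is a graded ideal, then $R$ and $I$ are $\ZZ^1$-graded, so \Cref{multigrading} yields that $C(I)=L:_R J$ is a $\ZZ^1$-graded ideal of $R$, i.e.\ a graded ideal. This is exactly the second claim.

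I do not expect any genuine obstacle here: the corollary is essentially a translation of \Cref{multigrading} into the two situations of most interest, and the only point needing a word of justification is the elementary fact that homogeneity with respect to the fine $\ZZ^n$-grading forces a homogeneous element to be a scalar multiple of a monomial, so that $\ZZ^n$-graded ideals coincide with monomial ideals. Everything else is immediate.
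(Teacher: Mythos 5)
Your proof is correct and is exactly the intended argument: the paper states \Cref{CIisMon} without proof as an immediate consequence of \Cref{multigrading}, obtained by taking the fine $\ZZ^n$-grading (so that $\ZZ^n$-graded ideals are monomial ideals) in the first case and the $n=1$ instance in the second. Nothing is missing.
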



\section{Variations on a theorem of Valla}

It is well-known that $I$ is of linear type if $I$ is generated by a regular sequence. So it is natural to ask whether the ideal  $I'=(I,f)$ is of linear type, when  $f$ is a non-zerodivisor modulo $I$ and $I$ is of linear type. Related to this question, Valla~\cite{Valla} showed that if $I'$ is of linear type and $f$ is a non-zerodivisor modulo $I$ which lies in the
Jacobson radical of $R$, then   $I$ is of linear type. Moreover, if $I^j: f=I^j$  for all $j\geq 0$, then the converse holds as well. 

In this section we consider variations and partial generalization of  Valla's result in term  of  the conductor ideal of $I$. 

\begin{Theorem}
\label{regularmodI}
Let $I$ be a monomial ideal in the polynomial ring $K[x_1,\ldots,x_n]$, and let $u$ be a monomial which is a non-zerodivisor modulo $I$. Then 
\begin{enumerate}
\item[(a)] $C((I,u))\subseteq (C(I),u)$. 
\item[(b)] $(C(I),u^k)\subseteq C((I,u))$ for some positive integer $k$.
\end{enumerate}
  
\end{Theorem}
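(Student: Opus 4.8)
Here is how I would approach the proof.

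The plan is to exploit the binomial (toric) nature of $\Rees(I)$ and $\Rees(I')$, where $I'=(I,u)$, and to rewrite binomials modulo the linear relations using Taylor syzygies. Write $T'=R[y_1,\dots,y_m,y_{m+1}]$, $\varphi'\colon T'\to\Rees(I')$ with $y_i\mapsto f_it$ $(i\le m)$, $y_{m+1}\mapsto ut$, let $J'=\Ker \varphi'$ and $L'\subseteq J'$ the ideal of linear relations; let $T\subseteq T'$, $J\subseteq J'$, $L\subseteq L'$ be the corresponding objects for $I=(f_1,\dots,f_m)$, $f_i$ the minimal monomial generators. Since $\Rees(I')$ is an affine semigroup ring over the domain $R$, $J'$ is a prime binomial ideal, so $C(I)=L:_RJ$ and $C(I')=L':_RJ'$ may be tested by checking $wB\in L$ (resp.\ $wB\in L'$) only for binomials $B=M_1-M_2$ with $M_1,M_2$ coprime monomials. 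As the syzygies of a monomial ideal are generated by Taylor syzygies, $L$ is generated by $\sigma_{pq}=\tfrac{\lcm(f_p,f_q)}{f_p}y_p-\tfrac{\lcm(f_p,f_q)}{f_q}y_q$ and $L'$ by these together with $\tau_i=\tfrac{\lcm(f_i,u)}{f_i}y_i-\tfrac{\lcm(f_i,u)}{u}y_{m+1}$; clearing denominators in $\tau_i$ gives the key congruence $u\,y_i\equiv f_i\,y_{m+1}\pmod{L'}$. Finally, since $u$ is a non-zerodivisor modulo the monomial ideal $I$, no variable of $u$ divides a minimal generator of $I$ (if $x_j$ divided $u$ and a minimal generator $g$, then $g/x_j\notin I$ while $u(g/x_j)\in I$, contradicting $I:u=I$); hence $R=K[\mathbf x,\mathbf z]$ with $f_i\in K[\mathbf x]$ and $u\in K[\mathbf z]$, so $\lcm(f_i,u)=f_iu$ and $\tau_i=uy_i-f_iy_{m+1}$. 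Comparing $\mathbf x$- and $\mathbf z$-parts after applying $\varphi'$ to a coprime binomial of $J'$ in which $M_1$ is free of $y_{m+1}$, one obtains the rigid normal form
\[
M_1=x^{\alpha}u^{b}\prod_i y_i^{\gamma_i},\quad M_2=x^{\alpha'}\Big(\prod_i y_i^{\gamma_i'}\Big)y_{m+1}^{b},\quad x^{\alpha}\prod_i f_i^{\gamma_i}=x^{\alpha'}\prod_i f_i^{\gamma_i'},\quad \sum_i\gamma_i=\sum_i\gamma_i'+b,
\]
with $x^{\alpha},x^{\alpha'}\in K[\mathbf x]$; the case $b=0$ gives exactly the coprime binomials of $J$, which are therefore free of the variables $\mathbf z$.

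For (a), take a monomial $w\in C((I,u))$ with $u\nmid w$; I must show $wB\in L$ for every coprime binomial $B\in J\subseteq J'$. Such a $B$ is free of $y_{m+1}$ and of $\mathbf z$, and $wB\in L'$; applying the substitution $y_{m+1}\mapsto 0$ — which fixes $wB$ and $L$ and sends $\tau_i\mapsto uy_i$ — gives $wB\in L+u(y_1,\dots,y_m)T$. Grading $T$ by $\mathbf z$-degree, $L$ is generated in $\mathbf z$-degree $0$, the binomial $wB$ is $\mathbf z$-homogeneous of the $\mathbf z$-degree of $w$, and every term of $u(y_1,\dots,y_m)T$ has $\mathbf z$-degree $\ge$ that of $u$; since $u\nmid w$ these are incompatible, so comparing $\mathbf z$-homogeneous components forces $wB\in L$. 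Thus $w\in C(I)\subseteq(C(I),u)$; as every monomial of $C((I,u))$ divisible by $u$ lies in $(C(I),u)$ and both are monomial ideals, (a) follows.

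For (b) I will prove $C(I)\subseteq C((I,u))$ and $u^{k}\in C((I,u))$ for $k$ the largest $y$-degree among binomial generators of $J'$; together these give $(C(I),u^{k})\subseteq C((I,u))$. Let $w\in C(I)$ be a monomial and $B=M_1-M_2$ a coprime binomial of $J'$. If $B$ is free of $y_{m+1}$ then $B\in J$ and $wB\in L\subseteq L'$. Otherwise put $B$ in the rigid form with $b\ge1$; since $\sum_i\gamma_i=\sum_i\gamma_i'+b\ge b$ we may choose $c_i$ with $0\le c_i\le\gamma_i$ and $\sum_ic_i=b$, and then $uy_i\equiv f_iy_{m+1}\pmod{L'}$ yields $M_1=x^{\alpha}\big(\prod_i y_i^{\gamma_i-c_i}\big)\prod_i(uy_i)^{c_i}\equiv x^{\alpha}\big(\prod_i f_i^{c_i}\big)\big(\prod_i y_i^{\gamma_i-c_i}\big)y_{m+1}^{b}\pmod{L'}$, so $B\equiv Cy_{m+1}^{b}\pmod{L'}$ with $C=x^{\alpha}\big(\prod_i f_i^{c_i}\big)\big(\prod_i y_i^{\gamma_i-c_i}\big)-x^{\alpha'}\prod_i y_i^{\gamma_i'}\in J$; hence $wB\equiv(wC)y_{m+1}^{b}\equiv0\pmod{L'}$. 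For $u^{k}\in C((I,u))$: if $B=M_1-M_2$ is a coprime binomial of $J'$ in the rigid form with $|\gamma|=\sum_i\gamma_i=\deg_y B$, the congruence $uy_i\equiv f_iy_{m+1}$ gives, modulo $L'$,
\[
u^{|\gamma|}M_1\equiv x^{\alpha}u^{b}\Big(\prod_i f_i^{\gamma_i}\Big)y_{m+1}^{|\gamma|},\qquad u^{|\gamma|}M_2\equiv u^{b}x^{\alpha'}\Big(\prod_i f_i^{\gamma_i'}\Big)y_{m+1}^{|\gamma|},
\]
whose difference equals $u^{b}\big(x^{\alpha}\prod_i f_i^{\gamma_i}-x^{\alpha'}\prod_i f_i^{\gamma_i'}\big)y_{m+1}^{|\gamma|}=0$; so $u^{|\gamma|}B\in L'$, and since an arbitrary binomial generator is a monomial times a coprime binomial of smaller or equal $y$-degree, $u^{k}J'\subseteq L'$.

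The heart of the argument is the rigid normal form: it is precisely the $\gcd$-freeness forced by ``$u$ a non-zerodivisor modulo $I$'' that makes $M_1$ carry the factor $u^{b}$ and $M_2$ carry no $\mathbf z$-variables, and this is what powers both the $\mathbf z$-grading comparison in (a) and the rewriting of $u^{c}\prod_i y_i^{c_i}$ as $\big(\prod_i f_i^{c_i}\big)y_{m+1}^{c}$ in (b). The points that need to be checked with care are that the conductor may be tested on coprime binomials (primeness of $J'$), that $L'$ is generated by the Taylor relations $\sigma_{pq},\tau_i$, and that in (b) the bound $k$ can be chosen uniformly over the finitely many binomial generators of $J'$.
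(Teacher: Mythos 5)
Your proof is correct, and at its core it follows the same strategy as the paper: both arguments reduce to the situation where the generators of $I$ and the monomial $u$ live in disjoint sets of variables (your observation that no variable of $u$ can divide a minimal generator of $I$ is exactly the paper's normalization $I\subset K[x_1,\dots,x_t]$, $\supp(u)=\{x_{t+1},\dots,x_n\}$), and both exploit the congruence $uy_i\equiv f_iy_{m+1}\pmod{L'}$ together with the $\ZZ^n$-grading. Part (b) is essentially the paper's argument: your ``rigid normal form'' for coprime binomials of $J'$ is the paper's reduction of a minimal generator to the shape $u^bg'-y^bg$, and the uniform choice of $k$ over a finite binomial generating set is the same. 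Where you genuinely diverge is in part (a): the paper expands $wh=\sum_i g_ih_i$ via the chain-of-substitutions lemma (\Cref{Substitutions}) and argues multidegree-by-multidegree that no $h_i$ can involve $y$ and that the coefficients $g_i$ carry a common factor $w'$, whereas you first pin down $L'$ by its Taylor-syzygy generators $\sigma_{pq},\tau_i$ and then apply the evaluation $y_{m+1}\mapsto 0$, which collapses $L'$ onto $L+(uy_1,\dots,uy_m)$ and lets the $\mathbf{z}$-multigrading finish the argument in one step. This is cleaner and avoids the bookkeeping with the $g_i$; its only extra cost is the (standard, but worth stating explicitly) fact that the Taylor relations generate the first syzygies of a monomial ideal, which is what guarantees that the image of $L'$ under $y_{m+1}\mapsto 0$ lands in $L+(uy_1,\dots,uy_m)$.
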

\begin{proof}
(a) First we show that $C((I,u))\subseteq (C(I),u)$. Without loss of generality we may assume that $I\subset S=K[x_1,\ldots,x_t]$ for some $t<n$ and $\supp(u)=\{x_{t+1},\ldots,x_n\}$. We let $T=S[y_1,\ldots,y_m]$ and $C(I) = L:_SJ $ with $L$ and $J$ as before. We set $S'=K[x_1,\ldots,x_n]$ and  $T'=T[y]$ to be polynomial rings and $I'=(I,u)$.
Define the $K$-algebra homomorphism $\varphi'\: T'\to S(I')$ with $\varphi'_{|T}=\varphi$, $\varphi'(x_i)=x_i$ for $t+1\leq i\leq n$  and $\varphi'(y)=u$. Let $J'=\Ker(\varphi')$ and $L'\subseteq J'$ the ideal generated by the linear relations in the $y_i$'s and $y$. 
By \Cref{CIisMon}, $C(I')$ is a monomial ideal. Therefore, it is enough to show that for any monomial $w\in C(I')$ which is not divisible by $u$, one has $w\in C(I)S'$.
Consider a generator $h=vy_{i_1}\cdots y_{i_s}-v'y_{j_1}\cdots y_{j_s}$ of $J$, where $v$ and $v'$ are monomials in $S$, and a monomial $w\in C(I')$ with $u\nmid w$. Since $J\subseteq J'$, we have $h\in J'$. Thus $wh\in L'$ and we may write $wh=\sum_{i=1}^kg_ih_i$ where $h_i\in L'$ are binomial generators and $g_i\in T'$. 

As was shown in the proof of \Cref{multigrading}, if we consider the natural $\ZZ^n$-grading on $S'$, then $J', L'\subset T'$ are $\ZZ^n$-graded $S'$-modules and  each binomial generator of $J'$ or $L'$ is $\ZZ^n$-graded with respect to the given grading. 
It follows that $wh$ and all $h_i$ are $\ZZ^n$-graded. So one may choose $g_i$'s to be $\ZZ^n$-graded elements, so that $\deg(wh)=\deg(g_ih_i)$ for all $i$. We claim that $h_i\in L$ for all $i$. Suppose this is not the case for some $i$. Then $h_i=u'y_j-u''y$ for some monomials $u',u''$ in $S'$ and an integer $j\in [m]$. Therefore, $\deg(h_i)=\deg(u)+\deg(u'')$, and then $\deg(wh)=\deg(g_i)+\deg(u)+\deg(u'')$. Let $\deg(w)=(a_1,\ldots,a_n)$, $\deg(h)=(a'_1,\ldots,a'_n)$, and let $u=x_{t+1}^{b_{t+1}}\cdots x_n^{b_n}$. Then $a_i+a'_i\geq b_i$ for $t+1\leq i\leq n$ and $a'_i=0$ for $t+1\leq i\leq n$. Thus $a_i\geq b_i$ for $t+1\leq i\leq n$. Hence $u$ divides $w$.  This contradicts to  our assumption on $w$. So the claim is proved and $h_i\in L$ for all $i$. By a similar argument one gets $g_i\in S'[y_1,\ldots,y_m]$ for all $i$.
Let $\deg(g_i)=(c_{1,i},\ldots,c_{n,i})$ for all $i$. Since $h,h_i\in T$ for all $i$, we conclude that $(a_{t+1},\ldots,a_n)=(c_{t+1,i},\ldots,c_{n,i})$ for all $i$. Hence $w'=x_{t+1}^{a_{t+1}}\cdots x_n^{a_n}$ divides $g_i$ for all $i$. Dividing both sides of the equality $wh=\sum_{i=1}^kg_ih_i$ by $w'$ we obtain $(w/w')h=\sum_{i=1}^kg'_ih_i$ with $g'_i\in T$. Hence $w/w'\in L:_S J=C(I)$, which implies that $w\in C(I)S'$, as desired. 

(b) We keep the notation as in (a). First we show that $C(I)\subseteq C(I')$. Let $f$ be a minimal binomial generator of $J'$. Then $f$  is $\ZZ^n$-graded. We may write 
$f=u'y^ag'-u''y^bg$ with $g$ and $g'$ monomials in $T$ and $u',u''$ monomials in $K[x_{t+1},\ldots,x_n]$. Since $f$ is a minimal generator of $J'$,  we must have that $\gcd(u'y^a, u''y^b)=1$. Hence we may assume that $f=u'g'-u''y^{b}g$. Note that $\deg(u'g')=\deg(u''y^bg)$.
This implies that $u'=u^bu''$. Since $\gcd(u',u'')=1$, we get $u''=1$ and thus $u'=u^b$. Therefore, $f=u^b g'-y^{b}g$. 
Let $g'=w'y_{i_1}\cdots y_{i_d}$, where $w'$ is monomial in $S$. Then $d\geq b$, since $y^bg$ has at least $b$ factors of the form $y$ and $y_i$. Thus
$u^b g'=w'(uy_{i_1})\cdots (uy_{i_b})y_{i_{b+1}}\cdots y_{i_d}$.
Since $uy_i=u_iy$ modulo $L'$, we have  $u^b g'=y^b g''$ modulo $L'$ with $g''$ a monomial in $T$. Hence we may assume that $f=y^b(g''-g')$ with $g''$ and $g'$ in $T$, and $g''-g'\in J'$. This is the case since $J'$ is a prime ideal and $y\not\in J'$. 
On the other hand, since $g'', g' \in T$, we have that  $g''-g'\in J$. Thus, if $v\in C(I)$ is a monomial, then $vf=y^bv(g''-g')=0$ modulo $L'$. This shows that $v\in C(I')$. 

Now, we show that $u^k\in C(I')$ for some positive integer $k$. Consider some binomial generator $f=wg-w'g'$ of $J'$, where $w,w'$ are monomials in $S'$ and $g$ and $g'$ are monomials in $K[y_1,\ldots,y_m,y]$.
The monomials $g$ and $g'$  have the same number of factors of the form $y_i$ and $y$, say $k$ factors. Then $u^kf=v_1y^k-v_2y^k=y^k(v_1-v_2)$ modulo $L'$, where $v_1, v_2$ are monomials in $S'$. Since $y^k(v_1-v_2)\in J'$ and $y^k\not\in J'$, it follows that $v_1-v_2\in J'$. This is only possible if $v_1=v_2$, and so $u^kf=0$ modulo $L'$.   Since $J'$ is finitely generated, we can find a common integer $k$ such that $u^k J'\subseteq L'$.  Therefore, $u^k\in C(I')$.
\end{proof}

For monomial ideals, the following corollary generalizes a result of Valla \cite[Theorem 2.3]{Valla}, see also  \cite[Proposition~3.11]{HSV}. 

\begin{Corollary}\label{monlinear}
With the assumptions and  notation  of \Cref{regularmodI}  it follows that $I$ is of linear type if and only if $(I,u)$  is of linear type.  
\end{Corollary}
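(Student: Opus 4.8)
The plan is to translate the statement into the language of conductors. As recalled in the introduction, an ideal is of linear type exactly when its conductor is the whole ring; with the notation of \Cref{regularmodI} this means that $I$ is of linear type if and only if $C(I)=S$ (where $S=K[x_1,\dots,x_t]$), and that $(I,u)$ is of linear type if and only if $C(I')=S'$ (where $I'=(I,u)$ and $S'=K[x_1,\dots,x_n]$). So the corollary amounts to the equivalence
\[
C(I)=S\quad\Longleftrightarrow\quad C(I')=S',
\]
which I would derive formally from the two inclusions
\[
C(I')\subseteq (C(I),u)\qquad\text{and}\qquad (C(I),u^k)\subseteq C(I')
\]
of ideals of $S'$ provided by \Cref{regularmodI}(a) and (b). Recall from the setup of \Cref{regularmodI} that $u$ is a non-constant monomial with $\supp(u)\subseteq\{x_{t+1},\dots,x_n\}$, disjoint from the variables $x_1,\dots,x_t$ in which $C(I)\subseteq S$ lives.

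The implication ``$I$ of linear type $\Rightarrow$ $(I,u)$ of linear type'' is then immediate from part (b): if $C(I)=S$, then $1\in C(I)\subseteq (C(I),u^k)\subseteq C(I')$, so $C(I')=S'$. For the reverse implication I would use part (a): if $C(I')=S'$, then $1\in (C(I),u)$, and I must conclude that already $1\in C(I)$. This is the only place where the hypothesis on $u$ is used. Writing $1=a+bu$ with $a\in C(I)S'$ and $b\in S'$, I would apply the $K$-algebra retraction $\rho\colon S'\to S$ that fixes $x_1,\dots,x_t$ and sends $x_{t+1},\dots,x_n$ to $0$; since $u$ is a non-constant monomial in the variables killed by $\rho$ we get $\rho(u)=0$, and since $\rho$ fixes the generators of $C(I)$ we get $\rho(a)\in C(I)$, whence $1=\rho(1)=\rho(a)\in C(I)$ and $C(I)=S$. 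Alternatively, by \Cref{CIisMon} the ideal $(C(I),u)$ is a monomial ideal whose minimal generators lie among those of $C(I)$ and $u$; as it contains the monomial $1$, one such generator equals $1$, and since $\deg u\ge 1$ this forces $1\in C(I)$.

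I expect no real obstacle here: once rephrased via the conductor, the corollary is a direct consequence of \Cref{regularmodI}, the only non-formal ingredient being the separation-of-variables step in the reverse direction, which is exactly where one exploits that $u$ is a non-constant monomial with support disjoint from the variables occurring in $C(I)$.
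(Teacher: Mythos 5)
Your proof is correct and is exactly the argument the paper intends: the corollary is stated as an immediate consequence of \Cref{regularmodI}, obtained by translating linear type into $C(I)=R$ and reading off both directions from the inclusions in parts (a) and (b), with the only non-formal step being your (correct) observation that $1\in(C(I),u)$ forces $1\in C(I)$ since $u$ is a non-constant monomial.
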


In general, for a graded ideal $I$ of linear type in a finitely generated graded $K$-algebra, it may happen that  there is a non-zerodivisor element $f$ modulo $I$ such that $(I,f)$ is not of linear type, see~\cite[Example 2.4]{Valla}.  

\medskip
 It is possible to generalize part (b) of \Cref{regularmodI} by adding some extra assumptions. This will be done in the next result. Before doing that, we will need the following concept: let $R$ be Noetherian and $I\subset R$ be an ideal. 
There is an $R$-linear map $\partial: S(I)\to S(I)(-1)$ which  assigns to $c_1c_2\cdots c_k \in S_k(I)$ with $c_i\in S_1(I)=I$ the element $c_1\cdot(c_2c_3\cdots c_k)\in S_{k-1}(I)$. Here $\cdot: (R, S(I))\to S(I)$  is the multiplication given by the $R$-module structure of $S(I)$. The definition of the map does not depend on which factor is pulled out and can be linearly extended to $S(I)$, see \cite{HSV}. Note that $\partial^k$ applied to $S_k(I)$ coincides with the natural $R$-module homomorphism $S_k(I)\to I^k$. Hence,  if   $\alpha\: S(I)\to\Rees(I)$ denotes the natural $R$-algebra homomorphism, then $\alpha(c)=\partial^k(c)t^k$ for all  $k$ and all $c\in S_k(I)$. 

\begin{Theorem}
\label{generalizationvalla}
Let $R$ be a Noetherian ring, $I\subset R$ be an ideal,  and let $f\in R$ be an element such that $I^j:f=I^j$ for all $j\geq 0$. Then there exists an integer $k>0$ such that  $(C(I), f^k)\subseteq C((I,f))$.
\end{Theorem}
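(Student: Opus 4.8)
The plan is to mimic the monomial argument of \Cref{regularmodI}(b), but replace the explicit combinatorics of exponent vectors with the intrinsic machinery of the symmetric algebra and the map $\partial$. Write $I' = (I,f)$, let $T = R[y_1,\dots,y_m]$ with $\varphi(y_i) = f_i t$, and let $T' = T[y]$ with $\varphi'(y) = ft$; as before $J = \Ker\varphi$, $J' = \Ker\varphi'$ and $L \subseteq J$, $L' \subseteq J'$ the subideals of linear relations. The claim splits into two inclusions: $C(I) \subseteq C(I')$ (up to a power of $f$, or perhaps even directly), and $f^k \in C(I')$ for some $k > 0$. For the second inclusion, the key point is that $S(I')/(yS(I')) \cong S(I)$ and that the hypothesis $I^j : f = I^j$ for all $j$ is exactly what makes $f$ a non-zerodivisor on every graded piece $I^j$ of $\Rees(I)$, hence (after checking) on $\Rees(I')$ in an appropriate sense. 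First I would fix a homogeneous generator $h$ of $J'$, say of $T'$-degree $r$ in the $y$-variables, and show that multiplying by $f^r$ kills the image of $h$ in $\Ker\alpha'$: the point is that $\partial'^{\,r}$ applied to $f^r h$ lands in $f^r I'^r \cap (\text{something in } I^r)$, and the regularity hypothesis forces this to vanish modulo $L'$. Since $J'$ is finitely generated, one common $k$ works, giving $f^k \in C(I')$.

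For the inclusion $C(I) \subseteq C(I')$ I would argue as follows. Take $v \in C(I)$, so $vJ \subseteq L$. A homogeneous generator $h$ of $J'$ in the $y$-grading can, modulo $L'$, be rewritten using the linear relations $f y_i - f_i y \in L'$ to clear out $y$: more precisely, if $h$ has $y$-degree $b$, then $f^b h \equiv y^b(g'' - g')$ modulo $L'$ for suitable $g'', g' \in T$ with $g'' - g' \in J'$, hence (since $y$ is a non-zerodivisor on $\Rees(I')$ because $\Rees(I') \subseteq R[t]$ and $f$ is regular modulo all powers of $I$) $g'' - g' \in J' \cap T = J$. Then $v(g'' - g') \in L \subseteq L'$, so $v f^b h \equiv 0 \pmod{L'}$. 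Collecting a common exponent over the finitely many generators of $J'$, we get $f^N v \in C(I')$ for all $v \in C(I)$ and some fixed $N$; absorbing this $N$ into the exponent $k$ of the previous paragraph then yields $(C(I), f^k) \subseteq C(I')$, which is the assertion. (Whether one can take the power of $f$ on $C(I)$ to be trivial, as in the monomial case, is a secondary point; the statement only asks for some $k$.)

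The main obstacle I anticipate is making the "clear out $y$ using $fy_i \equiv f_i y$" manipulation rigorous without the crutch of monomials. In the monomial proof one literally compares exponent vectors; here I would instead work in $S(I')$, use the surjection $\pi: S(I') \to S(I') / (yS(I'))= S(I)$ together with the fact that $y$ acts as a non-zerodivisor on $\Rees(I')$, and track elements through the three-term exact sequence $0 \to \Ker\alpha' \to S(I') \to \Rees(I') \to 0$. The regularity hypothesis $I^j : f = I^j$ enters precisely to guarantee that $f$ is a non-zerodivisor on $\Rees(I) = \bigoplus_j I^j t^j$, and from this one deduces the needed non-zerodivisor properties of $f$ and $y$ on $\Rees(I')$. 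A clean way to handle the bookkeeping is to grade everything by $y$-degree, prove the annihilation generator-by-generator in each degree $r$, and then invoke Noetherianity of $T'$ to extract a single $k$. The rest—translating Valla's original statement (if $C(I) = R$ then $C(I') = R$) as the special case—is then immediate by setting $v = 1$.
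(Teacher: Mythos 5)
Your plan has a genuine gap, and it sits exactly where the hypothesis $I^j:f=I^j$ has to do its work. The assertion $(C(I),f^k)\subseteq C((I,f))$ requires $C(I)\subseteq C((I,f))$ on the nose, since the ideal $(C(I),f^k)$ contains every $v\in C(I)$; your argument, which first multiplies a generator $h$ of $J'$ by $f^{b}$ in order to clear out $y$, only delivers $f^Nv\in C((I,f))$ for $v\in C(I)$, i.e.\ $(f^NC(I),f^k)\subseteq C((I,f))$, which is strictly weaker. The parenthetical remark that ``the statement only asks for some $k$'' misreads the claim: the exponent attaches only to $f$, not to $C(I)$. The paper obtains the unweakened inclusion by a different mechanism: from the presentation $0\to M\to I\oplus R\to (I,f)\to 0$ (which uses $I:f=I$) one gets a commutative square comparing $S(I)[y]\to S(I')$ with $\Rees(I)[y]\to\Rees(I')$, and one proves that the induced map $\gamma$ between the two kernels is \emph{surjective} by descending induction on the $y$-degree; the inductive step is precisely where $a_sf^s\in I^{k-s+1}$ forces $a_s\in I^{k-s+1}:f^s=I^{k-s+1}$. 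Surjectivity of $\gamma$ makes $\Ker(\alpha)[y]\to\Ker(\alpha')$ surjective, so $C(I)=\Ann_R(\Ker(\alpha))$ annihilates $\Ker(\alpha')$ with no power of $f$ appearing. Your sketch never engages with this step, and the only place it invokes the hypothesis is to derive non-zerodivisor properties that the hypothesis does not in fact imply.

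On those properties: the identification $S(I')/yS(I')\cong S(I)$ is not correct (the quotient is $S_R(I'/Rf)$, which under your hypothesis is $S_R(I/fI)$, not $S(I)$), and the claim that $y$, equivalently $ft$, is a non-zerodivisor on $\Rees(I')$ does not follow from the assumptions: $I^j:f=I^j$ makes $f$ regular on $R/I^j$, not on $R$, and in a general Noetherian ring $\Ann_R(f)$ can be a nonzero ideal contained in $\bigcap_jI^j$ (e.g.\ $R=\ZZ/6\ZZ$, $I=(2)$, $f=3$). Hence the cancellation step ``$y^b(g''-g')\in J'$ implies $g''-g'\in J$'' is unjustified unless one additionally assumes, say, that $R$ is a domain or local. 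By contrast, the half of your plan devoted to $f^k\in C((I,f))$ is sound in outline and matches the paper's computation with $\tau$ and $\partial$; note it also follows directly from \Cref{radical}, since $f\in(I,f)\subseteq\sqrt{C((I,f))}$, and the colon hypothesis is not needed there.
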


\begin{proof}
Since $I:f=I$, we obtain  an exact sequence
\[
0\To M\To I\dirsum R\stackrel{\varphi}\To I'\to 0,
\]
where $\varphi(a,b)=a-bf$, $M=\{(bf,b)\: b\in I\}$ and $I'=(I,f)$.

Then this exact sequence gives rise to the following commutative diagram with exact rows
 \begin{center}
\begin{tikzcd}
0\arrow{r}&G\arrow{r} \arrow[d,"\gamma"] &S(I)[y]\arrow[r,"\tau"] \arrow[ d, "\beta"]& S(I')\arrow{r}\arrow[ d, "\alpha' "] & 0\\
0\arrow{r}&H\arrow[r]&\Rees(I)[y]\arrow[r,"\delta"]& \Rees(I')\arrow{r}&0
\end{tikzcd}
\end{center}
Here,  the top exact row is induced by the above short exact sequence, $\beta=\alpha[y]$,    $\alpha\: S(I)\to \Rees(I)$
and $\alpha'\: S(I')\to \Rees(I')$ are the natural 
epimorphisms. Furthermore, $\delta|_{\Rees(I)}$  is the natural inclusion map $\Rees(I)\subseteq  \Rees(I')$, and $\delta(y) =ft$. Finally,  $H=\Ker(\delta)$, and  $\gamma$ is induced by $\beta$. 

Note that for all integers $k>0$, we have $fs-\partial(s)y\in G$ for all $s\in S_k(I)$. Moreover, $G$ is generated by the elements $fs-\partial(s)y$ with $s\in S_1(I)$. This implies that $\Im(\gamma)$ is generated by the elements $f(bt)-by$ with $b\in I$. 

We first  prove that $C(I)\subseteq C((I,f))$. The commutative diagram induces the short exact sequence 
\[
\Ker(\beta)\to \Ker(\alpha')\to \Coker(\gamma)\to 0, 
\]
Obviously, $\Ker(\beta)=\Ker(\alpha)[y]$. Therefore, $\Ann_R(\Ker(\beta))=C(I)$. 
Next we will show that $\Coker(\gamma)=0$. Then this will imply that $C(I)\subseteq C(I')$.  

Since  $\Coker(\gamma)$ is a graded $R$-module,  it suffice to show that  $\Coker(\gamma)_k=0$ for all $k\geq 0$. 
The assertion is trivial for $k=0$, since $H_0=0$. Therefore, we may assume that $k>0$, and we  let $h\in H_k$. Then, there exist $a_j\in I^{k-j}$ such that 
\[
h=\sum_{j=0}^k(a_jt^{k-j})y^j.
\]
By induction on $\deg_yh$, we show that $h\in \Im(\gamma)$. 
Since $h\in \Ker(\delta)$, it follows that 
\begin{eqnarray}\label{ytof}
    \sum_{j=0}^ka_jf^j=0.
\end{eqnarray}
If $\deg_y h=0$,  then $a_0=0$, and hence $h=0$. Suppose now that $\deg_y h=s$ with $s>0$. Then \eqref{ytof} implies that $a_sf^s=-\sum_{j=0}^{s-1}a_jf^j$. The element on the  right hand side of this equation belongs to $I^{k-s+1}$. Since we assume that $I^{k-s+1}:f=I^{k-s+1}$ we see that $a_s\in I^{k-s+1}:f^s=I^{k-s+1}:f=I^{k-s+1}$. Thus we can write $a_s=\sum_lb_lc_l$ with $b_l\in I^{k-s}$ and $c_l\in I$.  Therefore, modulo $\Im(\gamma)$ we have
\[
a_st^{k-s}y^s =\sum_lb_lt^{k-s}(c_ly)y^{s-1}
=\sum_lb_lt^{k-s}(fc_lt)y^{s-1}=fa_st^{k-s+1}y^{s-1}.
\]
Hence,  modulo $\Im(\gamma)$,  $h$ is congruent  to an element $h'\in \Ker(\delta)$ with $\deg_y h'\leq s-1$. Our induction hypothesis implies that $h'\in \Im(\gamma)$, and therefore  $h\in \Im(\gamma)$, as well. This shows that $\gamma$ is surjective, as desired. 

Next we show that if $h\in \Ker(\alpha')_k$ for some $k>0$,  then $f^kh=0$. Hence, since $\Ker(\alpha')$ is finitely generated, there exists an integer $k$ such that $f^k\Ker(\alpha')=0$. In other words,  $f^k\in C((I,f))$ for this integer.

Let $h\in \Ker(\alpha')_k$.  We have seen that $\Ker(\beta)\to \Ker(\alpha')$ is surjective. Therefore, there exists $g\in S(I)[y]_k$ such that $h=\tau(g)$. Let $g=\sum_{j=0}^ka_jy^{k-j}$ with $a_j\in S(I)_j$ for all $j$. Since $h\in \Ker(\alpha')$, it follows that 
\begin{eqnarray*}
0=\alpha'(h)=\delta(\beta(g))=\sum_{j=0}^k\alpha(a_j)(ft)^{k-j}=(\sum_{j=0}^k\partial^j(a_j)f^{k-j})t^k.
\end{eqnarray*}
Therefore, 
\begin{eqnarray}
\label{inker}
\sum_{j=0}^k\partial^j(a_j)f^{k-j}=0.
\end{eqnarray}
Now $f^kh=\sum_{j=0}^kf^{k-j}\tau(f^ja_j)\tau(y)^{k-j}$. Since  $f^ja_j-\partial^j(a_j)y^j\in \Ker(\tau)$, we see that $\tau(f^ja_j)=\partial^j(a_j)\tau(y)^j$. Thus, together \eqref{inker} it follows that   
\[f^kh=\sum_{j=0}^kf^{k-j}\partial^j(a_j)\tau(y)^j\tau(y)^{k-j}=(\sum_{j=0}^kf^{k-j}\partial^j(a_j))\tau(y)^k =0,
\]
as desired. 
\end{proof}

Let $(R,\mm)$ be a local ring. We say that the ideal $I$ is of {\em linear type on the punctured spectrum}, if $I_P$ is of linear type for any prime ideal $P\neq \mm$.

\begin{Corollary}
\label{flocal}  In addition to the assumptions of \Cref{generalizationvalla},  assume that $R$ is local with maximal ideal $\mm$ and that $f\in\mm$. If  $I$ is of linear type on the punctured spectrum, then $(I,f)$ is of linear type on the punctured spectrum. 
\end{Corollary}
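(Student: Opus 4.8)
The plan is to reduce everything to the vanishing locus of the conductor, using \Cref{variety} to convert ``linear type'' into ``not in $V(C(-))$'' and \Cref{generalizationvalla} to control $C((I,f))$ in terms of $C(I)$ and $f$. Write $I'=(I,f)$. First I would note that the hypotheses of \Cref{generalizationvalla} are exactly those in force here ($R$ Noetherian, $I^j:f=I^j$ for all $j\ge 0$), so that theorem applies and yields an integer $k>0$ with $(C(I),f^k)\subseteq C(I')$.

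By \Cref{variety}, $I'_P$ is of linear type for a prime $P$ if and only if $P\notin V(C(I'))$; hence it suffices to prove $V(C(I'))\subseteq\{\mm\}$, i.e. that no prime $P\neq\mm$ contains $C(I')$. On the other hand, the hypothesis that $I$ is of linear type on the punctured spectrum says, again via \Cref{variety}, that every prime $P\neq\mm$ lies in $\mathcal L(I)=\Spec(R)\setminus V(C(I))$; equivalently $V(C(I))\subseteq\{\mm\}$ (this includes the trivial case $C(I)=R$, where $V(C(I))=\emptyset$).

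Now I would simply take vanishing loci in the inclusion $(C(I),f^k)\subseteq C(I')$:
\[
V(C(I'))\subseteq V\big((C(I),f^k)\big)=V(C(I))\cap V(f^k)=V(C(I))\cap V(f)\subseteq\{\mm\}\cap V(f).
\]
Since $f\in\mm$, we have $\mm\in V(f)$, so the right-hand side is contained in $\{\mm\}$. Therefore $V(C(I'))\subseteq\{\mm\}$, and consequently $I'_P=(I,f)_P$ is of linear type for every prime $P\neq\mm$, which is the claim.

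There is not much of a real obstacle here beyond the bookkeeping: the one point that makes the argument work is that passing from $C(I)$ to $C((I,f))$ can only introduce the extra component $V(f^k)=V(f)$, and intersecting $V(C(I))\subseteq\{\mm\}$ with $V(f)$ cannot enlarge it past $\{\mm\}$ precisely because $f\in\mm$ guarantees $\mm\in V(f)$. If $f$ were a unit one would even get $V(C(I'))=\emptyset$; the local hypothesis $f\in\mm$ is exactly what is needed so that the single possible bad prime $\mm$ remains the only candidate.
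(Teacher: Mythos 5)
Your proof is correct and is precisely the intended deduction (the paper states this corollary without proof): translate both hypotheses and conclusion through \Cref{variety} into statements about $V(C(I))$ and $V(C((I,f)))$, and apply the inclusion $(C(I),f^k)\subseteq C((I,f))$ from \Cref{generalizationvalla} to get $V(C((I,f)))\subseteq V(C(I))\cap V(f)\subseteq\{\mm\}$. The only quibble is cosmetic: the final containment in $\{\mm\}$ holds whether or not $f\in\mm$, so that hypothesis is not actually load-bearing in your chain of inclusions, as your own closing remark essentially acknowledges.
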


\medskip





\section{The conductor of special classes of monomial ideals}

We will now determine the conductor ideal for two families of monomial ideals. From \Cref{CIisMon} we know that $C(I)$ is a monomial ideal in this case, and this fact will be used throughout this section. Let $R=K[x_1,\dots, x_n]$. The monomial ideals considered here will be of \emph{fiber type}. That is, let $\varphi':K[y_1,\dots, y_m]\to K[f_1t,\dots, f_mt]$ be defined by $\varphi'(y_i)=f_it$ and set $H=\Ker{\varphi'}$. Then $I$ is of fiber type if $J=L+HT$. Here, as before, $J$ is the defining ideal of the Rees algebra and $L$ is the ideal generated by linear relations. Then $C(I)=(L:_T J)\cap R=(L:_T HT)\cap R$. Finally, when $I$ is a monomial ideal, then $L$, $H$ and $J$ are all binomial ideals. When working with these binomial ideals, we will often use the following. 
\begin{Lemma} [{\cite[Lemma 3.8]{Binomial_ideals}}] \label{Substitutions}
Let $I\subset K[x_1,\ldots, x_n]$ be an ideal generated by binomials $f_1,\ldots,f_r$. Let $\bm{x^u}-\bm{x^v}$ be a binomial belonging to $I$. Then, there exists an expression
$$
\bm{x^u}-\bm{x^v}=\sum_{k=1}^{s}\epsilon_k \bm{x^{w_k}}f_{i_k},
$$
where $\epsilon_k\in\{\pm 1\}$, $\bm{w_k}\in \mathbb{Z}_{\ge 0}^n$, and $1\le i_k\le r$ for $k=1,2,\ldots,s$, and where $\bm{x^{w_p}}f_{i_p}\ne \bm{x^{w_q}}f_{i_q}$ for all $1\le p<q\le s$.
\end{Lemma}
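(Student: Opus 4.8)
The plan is to prove this by a well-ordering argument on the exponent vectors appearing in an arbitrary representation of $\bm{x^u}-\bm{x^v}$ as a $K[\xb]$-linear combination of the $f_i$. First I would fix a monomial order $<$ on $K[x_1,\dots,x_n]$ and, since $\bm{x^u}-\bm{x^v}\in I=(f_1,\dots,f_r)$, write $\bm{x^u}-\bm{x^v}=\sum_{k}g_k f_{i_k}$ with $g_k\in K[\xb]$. Expanding each $g_k$ into monomials, I may assume each coefficient is of the form $\epsilon_k\bm{x^{w_k}}$ with $\epsilon_k\in K^{*}$ (and after collecting, we are over a field so constants pose no obstruction; the claim $\epsilon_k\in\{\pm1\}$ is then reached because $f_i=\bm{x^{a_i}}-\bm{x^{b_i}}$ has unit coefficients $\pm1$ and any surviving term must have coefficient $\pm1$). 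So the real content is: among all such finite expressions, pick one and remove the ``redundant'' repetitions $\bm{x^{w_p}}f_{i_p}=\bm{x^{w_q}}f_{i_q}$.

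The key steps, in order: (1) Start from any expression $\bm{x^u}-\bm{x^v}=\sum_{k=1}^{s}\epsilon_k\bm{x^{w_k}}f_{i_k}$. (2) Suppose two terms coincide, say $\bm{x^{w_p}}f_{i_p}=\bm{x^{w_q}}f_{i_q}$ with $p<q$. If $\epsilon_p=-\epsilon_q$, the two terms cancel and can simply be deleted, strictly decreasing $s$. If $\epsilon_p=\epsilon_q$, replace the pair by a single term $2\epsilon_p\bm{x^{w_p}}f_{i_p}$; but $2\epsilon_p$ is still a unit, and iterating we may always merge equal ``monomial times generator'' products into one, again strictly decreasing the number $s$ of \emph{distinct} pairs $(\bm{w_k},i_k)$. (3) Since $s\in\mathbb{Z}_{\ge0}$, this process terminates, yielding an expression in which all the products $\bm{x^{w_k}}f_{i_k}$ are pairwise distinct, which is exactly the asserted form once we normalize the coefficients. (4) For the coefficient normalization $\epsilon_k\in\{\pm1\}$: write $f_{i_k}=\bm{x^{a_{i_k}}}-\bm{x^{b_{i_k}}}$, so $\epsilon_k\bm{x^{w_k}}f_{i_k}=\epsilon_k\bm{x^{w_k+a_{i_k}}}-\epsilon_k\bm{x^{w_k+b_{i_k}}}$; the full sum must equal $\bm{x^u}-\bm{x^v}$, a difference of two monomials each with coefficient $\pm1$, and one shows by descending induction on the largest monomial still carrying a ``wrong'' coefficient that every surviving $\epsilon_k$ must in fact be $\pm1$ — any other value would force an uncancellable monomial with non-$\pm1$ coefficient on the left-hand side.

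The main obstacle is step (4), the bookkeeping that forces the coefficients to lie in $\{\pm1\}$ rather than merely in $K^{*}$: one needs a careful cancellation argument (most cleanly via a leading-term / minimal-counterexample analysis with respect to the chosen monomial order) to rule out linear combinations where nontrivial scalars conspire. Step (2)–(3), the removal of repeated products, is routine termination bookkeeping. I would therefore spend the bulk of the write-up on a clean induction in (4), and dispatch (1)–(3) in a sentence or two. (Since this Lemma is quoted verbatim from \cite{Binomial_ideals}, in the paper itself one would simply cite it; the above is how one would reconstruct the proof if needed.)
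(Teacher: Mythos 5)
The paper gives no proof of this lemma---it is quoted from \cite{Binomial_ideals}---so your argument has to stand on its own, and it does not. Steps (1)--(3) are harmless but accomplish essentially nothing: merging or cancelling repeated products $\bm{x^{w_p}}f_{i_p}=\bm{x^{w_q}}f_{i_q}$ only produces an expression with pairwise distinct products and coefficients in $K^{*}$. The whole content of the lemma is getting the coefficients into $\{\pm1\}$, and your step (4) is where the argument breaks: it is simply false that an expression with pairwise distinct products must have coefficients $\pm1$. Take $f_1=x-y$, $f_2=y-z$, $f_3=x-z$ and the identity $x-z=2f_3-f_1-f_2$. The three products are distinct, yet a coefficient equals $2$; no monomial on the left ends up with a ``wrong'' coefficient, because the excess $2$ on $x$ coming from $2f_3$ is cancelled by the $-1$ from $-f_1$, and similarly for $z$. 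So the ``uncancellable monomial with non-$\pm1$ coefficient'' that your descending induction relies on need not exist. The lemma asserts the \emph{existence} of a good expression, not that every repetition-free expression is good; your procedure, applied to the expression above, terminates without producing one.

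The standard proof (and the one underlying the ``chain of monomials'' reformulation the paper records right after the lemma) runs in the opposite direction: first connectivity, then telescoping. Put a graph structure on the monomials of $K[x_1,\ldots,x_n]$ by joining $m$ and $m'$ whenever $m-m'=\pm\bm{x^{w}}f_i$ for some $i$ and some $\bm{w}\in\mathbb{Z}_{\ge0}^n$. As a $K$-vector space, $I$ is spanned by these edge-differences; each of them is supported on a single connected component and has coefficient sum $0$ there, so every element of $I$ has coefficient sum $0$ on each component. Applied to $\bm{x^u}-\bm{x^v}\in I$ with $\bm{x^u}\ne\bm{x^v}$, this forces $\bm{x^v}$ into the component of $\bm{x^u}$; a vertex-simple path from $\bm{x^u}$ to $\bm{x^v}$ then telescopes to the desired expression, with signs automatically $\pm1$ and with pairwise distinct products because equal products would mean the path repeats an unordered pair of consecutive vertices. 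Your step (4) could in principle be replaced by the alternative induction ``pair up two terms whose expansions share the current largest monomial and rewrite,'' but that genuinely restructures the expression; it is not the rigidity statement you asserted.
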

Another way to phrase the above lemma is to say that there is a chain of monomials  $\bm{x^u}=\bm{x^{a_1}}\to \bm{x^{a_2}}\to\ldots\to \bm{x^{a_s}}\to\bm{x^{a_{s+1}}} =\bm{x^v}$ (of length $0$ if $\bm{x^u}=\bm{x^v}$) such that $\bm{x^{a_k}}-\bm{x^{a_{k+1}}}=\epsilon_k\bm{x^{w_k}}f_{i_k}$. That is, each arrow corresponds to a substitution from one monomial to the next one using a generator of the binomial ideal. The condition $\bm{x^{w_p}}f_{i_p}\ne \bm{x^{w_q}}f_{i_q}$ for all $1\le p<q\le s$ is equivalent to saying that this chain can be chosen to be minimal in a sense that all monomials in the chain are distinct unless $\bm{x^u}=\bm{x^v}$. We will later use this lemma to prove that a given binomial does not belong to a specified binomial ideal.


\subsection{Bounded Veronese ideals}
In this subsection we will compute the conductor ideal of the bounded Veronese ideal $I_{n,d,c}$ for all values of $n$, $d$ and $c$. Recall that $I_{n,d,c}$ is generated by all $x_1^{\alpha_1}\cdots x_n^{\alpha_n}$ such that $\alpha_1+\cdots+\alpha_n=d$ and $\alpha_i\le c$ for all $i$. 

First of all, we will assume $n\ge 2$, $d\ge 0$, $c\ge 0$ since the case $n=1$ is trivial. Secondly, we may assume that $c\le d \le nc$. Indeed, if $d<c$, then $I_{n,d,c}=I_{n,d,d}$ (which falls into the case $c\le d \le nc$), whereas if $d>nc$, then $I_{n,d,c}=0$ and this case is trivial. 
Before doing further case reductions, we will need the following lemma.

\begin{Lemma}
\label{Lem:Rees algebra of a scaled ideal}    
Let $R$ be a Noetherian ring, $I$ and ideal of $R$ and $f\in R$ a non-zerodivisor with $I'=fI$. Moreover, let $J$ and $J'$ be the defining ideals of $\Rees(I)$ and $\Rees(I')$ respectively. Then $J=J'$.
\end{Lemma}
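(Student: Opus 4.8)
Let $R$ be a Noetherian ring, $I$ an ideal of $R$ and $f \in R$ a non-zerodivisor with $I' = fI$. Let $J$ and $J'$ be the defining ideals of $\Rees(I)$ and $\Rees(I')$ respectively. Then $J = J'$.

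My plan is to show directly that the two Rees algebras are equal as subalgebras of $R[t]$, which forces the defining ideals to coincide once we fix compatible generating sets. Write $I = (f_1, \dots, f_m)$, so that $I' = (ff_1, \dots, ff_m)$. The homomorphisms presenting the two Rees algebras are $\varphi \colon T = R[y_1, \dots, y_m] \to \Rees(I)$ with $\varphi(y_i) = f_i t$, and $\varphi' \colon T \to \Rees(I')$ with $\varphi'(y_i) = f f_i t$. The key observation is the substitution trick: inside $R[t]$ one has $\Rees(I') = R[ff_1 t, \dots, ff_m t]$, and this equals $R[f_1(ft), \dots, f_m(ft)]$; in other words $\Rees(I')$ is obtained from $\Rees(I)$ by the substitution $t \mapsto ft$. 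Since $f$ is a non-zerodivisor, the $R$-algebra map $R[t] \to R[t]$ given by $t \mapsto ft$ is injective (a polynomial $\sum a_j t^j$ maps to $\sum a_j f^j t^j$, which vanishes iff all $a_j f^j = 0$ iff all $a_j = 0$), hence it restricts to an isomorphism $\Rees(I) \xrightarrow{\sim} \Rees(I')$.

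The crux is then to verify that this isomorphism is compatible with $\varphi$ and $\varphi'$, i.e. that the diagram with $T$ on top, $\varphi$ and $\varphi'$ going down to $\Rees(I)$ and $\Rees(I')$, and the substitution isomorphism on the bottom, commutes. This is immediate on generators: $y_i \mapsto f_i t \mapsto f_i(ft) = ff_i t = \varphi'(y_i)$. Commutativity of the diagram gives $\Ker \varphi = \Ker \varphi'$, that is, $J = J'$. I should be slightly careful about the degenerate case $I = 0$ (then $I' = 0$ and $J = J' = T_{+}$, or one simply excludes it), and about whether one insists the same generating tuple is used for both ideals — the statement as phrased presupposes a common index set, and the natural reading (which the subsequent application to $I_{n,d,c}$ uses) is exactly that $I' = fI$ with generators scaled by $f$.

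The main obstacle, such as it is, is purely bookkeeping: making sure the injectivity of $t \mapsto ft$ on $R[t]$ is invoked correctly — this is where the non-zerodivisor hypothesis is essential, since without it the substitution map can collapse and $\Rees(I')$ could be a proper quotient image rather than an isomorphic copy of $\Rees(I)$. I expect the whole argument to take only a few lines once the substitution $t \mapsto ft$ is set up. An alternative, more computational route would be to check the two inclusions $J \subseteq J'$ and $J' \subseteq J$ on binomial/polynomial relations directly via \Cref{Substitutions}, but the isomorphism-of-algebras argument above is cleaner and avoids any case analysis on the form of the relations.
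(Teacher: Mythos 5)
Your proof is correct and rests on the same idea as the paper's: the paper computes $\varphi'(h)=f^{d}\varphi(h)$ for $h$ homogeneous of degree $d$ and uses that $f$ is a non-zerodivisor, which is precisely your substitution $t\mapsto ft$ viewed degreewise. Your packaging via the injective endomorphism of $R[t]$ is marginally cleaner in that it avoids the explicit reduction to homogeneous elements, but the two arguments are essentially identical.
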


\begin{proof}
Say $I=(f_1,\dots, f_m)$ and let $T=R[y_1,\dots, y_m]$ be equipped with the map $\varphi:T\to \Rees(I)$ as before by setting $\varphi(y_i)=f_it$ so that $J=\Ker \varphi$ and similarly for $I'$. The key thing to note here is that $J\subset T$ is a homogeneous ideal. So pick an $h\in J$, homogeneous of degree $d$. Then
\[
0=f^d\varphi(h) = f^dh(y_1t,\dots, y_mt) = h(fy_1t,\dots, fy_mt) = \varphi'(h),
\]
showing that $J\subseteq J'$. But reading the equalities from right to left, we get that if $\varphi'(h)=0$, then $f$ being a non-zerodivisor forces $\varphi(h)=0$ and we get the reverse inclusion $J'\subseteq J$ as desired.
\end{proof}

Finally, given $c\le d\le nc$ and $n\ge 2$, we have either $c\le d\le (n-1)c$ or $(n-1)c<d\le nc$.  In the latter case, we can do a reduction to the former one using the following lemma. 
\begin{Lemma}
\label{Lem:reducing Veronese}
Let $(n-1)c<d\le nc$. Then $I_{n,d,c}=(x_1\cdots x_n)^{d-(n-1)c}I_{n,(nc-d)(n-1), nc-d}$.   
\end{Lemma}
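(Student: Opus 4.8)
The plan is to prove the identity $I_{n,d,c}=(x_1\cdots x_n)^{d-(n-1)c}I_{n,(nc-d)(n-1),\,nc-d}$ by showing that the map sending a monomial $\bm{x^\alpha}$ to $(x_1\cdots x_n)^{(n-1)c-d}\bm{x^\alpha}$ — or equivalently, dividing by $(x_1\cdots x_n)^{d-(n-1)c}$ — gives a bijection between the minimal generators of the two ideals. Since both ideals are monomial ideals generated in a single degree, it suffices to match generators. Write $e=d-(n-1)c$; note $0<e\le c$ because $(n-1)c<d\le nc$. A minimal generator of $I_{n,d,c}$ is $\bm{x^\alpha}$ with $\sum\alpha_i=d$ and $0\le\alpha_i\le c$. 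Set $\beta_i=\alpha_i-e$. The first step is to check that $\beta_i\ge 0$ for all $i$: this is exactly the constraint $\alpha_i\ge e$, which I would derive from $\sum_{j\ne i}\alpha_j\le(n-1)c$ together with $\sum_j\alpha_j=d$, giving $\alpha_i=d-\sum_{j\ne i}\alpha_j\ge d-(n-1)c=e$.

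The second step is to verify the two defining inequalities for $I_{n,(nc-d)(n-1),\,nc-d}$. First, $\sum_i\beta_i=d-ne=d-n(d-(n-1)c)=d-nd+n(n-1)c=(n-1)(nc-d)$, which is indeed the prescribed degree. Second, the upper bound: we need $\beta_i\le nc-d$, i.e. $\alpha_i-e\le nc-d$, i.e. $\alpha_i\le nc-d+e=nc-d+d-(n-1)c=c$, which is precisely the original constraint $\alpha_i\le c$. So the correspondence $\bm{x^\alpha}\mapsto\bm{x^\beta}$ maps generators of $I_{n,d,c}$ into generators of the smaller Veronese ideal, and every step is reversible: given $\bm{x^\beta}$ a generator of $I_{n,(nc-d)(n-1),\,nc-d}$, set $\alpha_i=\beta_i+e$; then $\alpha_i\ge e\ge 0$ holds automatically, $\alpha_i\le c$ follows from $\beta_i\le nc-d$, and $\sum\alpha_i=(n-1)(nc-d)+ne=d$. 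Hence the two generating sets are in bijection via multiplication/division by $(x_1\cdots x_n)^e$, which proves the ideal identity.

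The only mildly delicate point is making sure the lower-bound argument ($\alpha_i\ge e$) is carried out correctly, since it uses the \emph{other} coordinates' upper bounds rather than a direct constraint on $\alpha_i$; everything else is a routine arithmetic substitution. I would also remark briefly that no generator is lost or gained — the bijection is on the level of monomials appearing as minimal generators, and since both ideals are generated in a single degree by \emph{all} monomials satisfying the stated constraints, matching these sets suffices to conclude equality of the ideals. This also implicitly uses $e\le c$ (so that the smaller Veronese ideal has $nc-d\ge 0$ and is a genuine bounded Veronese ideal in the sense defined), which follows from $d\le nc$.
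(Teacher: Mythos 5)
Your proof is correct and follows essentially the same route as the paper's: both establish a bijection between minimal generators via multiplication/division by $(x_1\cdots x_n)^{d-(n-1)c}$, with the key point in each being the lower bound $\alpha_i\ge d-(n-1)c$ derived from the upper bounds on the other coordinates (the paper phrases this by contradiction, you phrase it directly, but it is the same inequality). No gaps.
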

\begin{proof}
 In order to prove the claim, we first let $k=d-(n-1)c\ge 1$. Then $c-k=c-d+(n-1)c=nc-d$ and $d-nk=d-n(d-(n-1)c)=d-nd+n^2c-nc=(nc-d)(n-1)$. Therefore the statement can be rewritten as $I_{n,d,c}=(x_1\cdots x_n)^kI_{n,d-nk,c-k}$. First let $m=x_1^{\alpha_1}\cdots x_n^{\alpha_n}$ be a minimal generator of $I_{n,d-nk,c-k}$ so that $\alpha_1+\ldots+ \alpha_n=d-nk$ and $\alpha_i\le c-k$ for all $i$. Then let $m'=(x_1\cdots x_n)^{k}m=x_1^{\beta_1}\cdots x_n^{\beta_n}$, where $\beta_i=\alpha_i+k\le c-k+k=c$ for all $i$, and $\beta_1+\ldots+\beta_n=\alpha_1+\ldots+\alpha_n+nk=d-nk+nk=d$. In other words, $m'=(x_1\cdots x_n)^{k}m$ is a minimal generator of $I_{n,d,c}$. Conversely, let $m'=x_1^{\beta_1}\cdots x_n^{\beta_n}$ be a minimal generator of $I_{n,d,c}$. We will first show that $\beta_i\ge k$ for all $i$. Indeed, assume without loss of generality that $\beta_1\le k-1$. We also have $\beta_i\le c$ for all $2\le i\le n$. Then $\beta_1+\ldots+\beta_n\le k-1+(n-1)c=d-(n-1)c-1+(n-1)c=d-1$, which is a contradiction. Therefore, $\beta_i\ge k$ for all $i$ and we can write $m'=(x_1\cdots x_n)^km$ where $m=x_1^{\alpha_1}\cdots x_n^{\alpha_n}$ with $\alpha_i=\beta_i-k\le c-k$ for all $i$, and $\alpha_1+\ldots+\alpha_n=\beta_1+\ldots+\beta_n-nk=d-nk$. Therefore, $m$ is a minimal generator of $I_{n,d-nk,c-k}$. This finishes the proof.   
\end{proof}

Combining \Cref{Lem:Rees algebra of a scaled ideal} and \Cref{Lem:reducing Veronese} we conclude that it suffices to consider the case $c\le d\le (n-1)c$.
Note that it is possible that $nc-d=0$, in which case $I_{n,d,c}$ reduces to $I_{n,0,0}$. This is the reason we could not discard the case of $c=0$ until this point. But now we can finally do so: since we are in the case $c\le d\le (n-1)c$, the case $c=0$ implies $d=0$, and therefore $I_{n,d,c}=I_{n,0,0}=R$. In conclusion, from now on we can assume $1\le c\le d\le (n-1)c$.

\begin{Theorem}
\label{veroneseConductor}
Let $I_{n,d,c}$ be an ideal of Veronese type where $1\le c\leq d \leq (n-1)c$. Then $I_{n,d,c}$ is of linear type if $(d,c)=(n-1,1)$ or $(d,c)=(1,1)$ and otherwise
\[
C(I_{n,d,c}) = I_{n,p+1,1} \text{ where } p=\left \lfloor \frac{d-2}{c} \right \rfloor.
\]
\end{Theorem}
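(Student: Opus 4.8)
The plan is to determine the monomial ideal $C(I_{n,d,c})$ — it is a monomial ideal by \Cref{CIisMon} — by identifying which monomials lie in it. Since $I_{n,d,c}$ is of fiber type, $J=L+HT$ and $C(I_{n,d,c})=(L:_T HT)\cap R$, so a monomial $w$ lies in $C(I_{n,d,c})$ if and only if $wg\in L$ for every $g$ in a generating set of the toric ideal $H$. I would take the generating set of $H$ consisting of the exchange binomials $g_{u,v,i,j}=y_uy_v-y_{u-e_i+e_j}y_{v+e_i-e_j}$, where $y_u$ is the variable of $T$ attached to the generator $\xb^u$ of $I_{n,d,c}$ and $i\ne j$ are such that $u-e_i+e_j$ and $v+e_i-e_j$ are again exponent vectors of generators. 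A short count shows that $H=0$ — equivalently $I_{n,d,c}$ is of linear type — exactly when no such exchange exists, which happens precisely for $(d,c)=(1,1)$ and $(d,c)=(n-1,1)$: there $I_{n,d,c}$ is $\mm$, respectively the squarefree Veronese ideal generated by the products of $n-1$ of the variables, and both are of linear type (of fiber type with $H=0$). In all remaining cases of the range one has $d\ge2$ and $\mu(I_{n,d,c})>n$, so $I_{n,d,c}$ is not of linear type by \cite[Proposition~2.4]{HSV}, and the task becomes to prove $C(I_{n,d,c})=I_{n,p+1,1}$. As $I_{n,p+1,1}$ is exactly the monomial ideal of all monomials with at least $p+1$ distinct variables, it suffices to show that a monomial $w$ lies in $C(I_{n,d,c})$ if and only if $|\supp w|\ge p+1$.

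For $C(I_{n,d,c})\subseteq I_{n,p+1,1}$ I would use contraposition and \Cref{cilocal}. Suppose $w$ is a monomial with $|\supp w|=k\le p$, and let $S$ be the multiplicative set generated by $\{x_\ell:\ell\in\supp w\}$. If $w\in C(I_{n,d,c})$, then the unit $w$ lies in $S^{-1}C(I_{n,d,c})=C(S^{-1}I_{n,d,c})$, so $S^{-1}I_{n,d,c}$ is of linear type. But since each generator $\xb^\alpha$ of $I_{n,d,c}$ equals a unit of $S^{-1}R$ times $\prod_{j\notin\supp w}x_j^{\alpha_j}$, the ideal $S^{-1}I_{n,d,c}$ is the extension to $S^{-1}R$ of the ideal $I_{n-k,\,d-kc,\,c}$ in the remaining $n-k$ variables, where $d-kc\ge d-pc\ge2$ by the definition of $p$. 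By faithfully flat base change, $I_{n-k,d-kc,c}$ would then be of linear type. However $n-k\ge n-p\ge2$, its degree is $\ge2$, and — because $(d,c)\ne(n-1,1)$ rules out the exceptional possibility $c=1$, $d-kc=n-k-1$ — a direct count gives $\mu(I_{n-k,d-kc,c})>n-k$, so $I_{n-k,d-kc,c}$ is not of linear type. Contradiction.

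For $I_{n,p+1,1}\subseteq C(I_{n,d,c})$ I would fix a squarefree monomial $w$ of degree $p+1$ and check $wg_{u,v,i,j}\in L$ for every exchange binomial, using that $L$ is generated by the linear syzygies $\sigma_{a,b}=\tfrac{\lcm(m_a,m_b)}{m_a}y_a-\tfrac{\lcm(m_a,m_b)}{m_b}y_b$ (with $m_a=\xb^a$), and that by \Cref{Substitutions} membership in $L$ is witnessed by a chain of substitutions using these generators. If $\supp w\cap\{i,j\}\ne\emptyset$, say $x_j\mid w$ (the case $x_i\mid w$ is symmetric), then the relevant $\lcm$'s are small — $\sigma_{u,u-e_i+e_j}=x_jy_u-x_iy_{u-e_i+e_j}$ and $\sigma_{v,v+e_i-e_j}=x_iy_v-x_jy_{v+e_i-e_j}$ — and the two substitutions $wy_uy_v\rightsquigarrow\tfrac{wx_i}{x_j}y_{u-e_i+e_j}y_v\rightsquigarrow wy_{u-e_i+e_j}y_{v+e_i-e_j}$ (the first legal since $x_j\mid w$, the second since $x_i$ divides $\tfrac{wx_i}{x_j}$) already give $wg_{u,v,i,j}\in L$. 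The hard case is $\supp w\cap\{i,j\}=\emptyset$: then $\supp w$ is a $(p+1)$-element subset of the $n-2$ indices outside $\{i,j\}$, so the case is vacuous unless $p\le n-3$, and one must route the exchange through the variables dividing $w$ — rewriting the pair of $y$-generators step by step so as to transport one unit of exponent along a path inside $\supp w$ from $i$ to $j$, at each step modifying whichever of the two current generators has exponent $<c$ at the relevant coordinate (preparing, if necessary, by first modifying the other generator). I expect this routing to be the main obstacle: it is a finite but intricate explicit verification, and the inequalities $pc\le d-2<(p+1)c$ are precisely what guarantee that at every step enough intermediate coordinates remain strictly below $c$ — which is why the threshold turns out to be $p+1=\lfloor(d-2)/c\rfloor+1$.
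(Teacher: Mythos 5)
Your overall architecture matches the paper's (fiber type, exchange binomials generate $H$, reduce to testing monomials against them), but the two inclusions fare very differently. For $C(I_{n,d,c})\subseteq I_{n,p+1,1}$ your localization argument is a genuinely different route: the paper instead exhibits, in three explicit cases, a nontrivial exchange relation $y_iy_j-y_ky_l$ all four of whose monomials are divisible by $(x_1\cdots x_p)^c$, and shows by a multidegree comparison that no substitution chain can leave $uy_iy_j$ when $\supp u\subseteq\{1,\dots,p\}$. Your route via \Cref{cilocal} and $S^{-1}I_{n,d,c}=I_{n-k,\,d-kc,\,c}\,S^{-1}R$ is clean and, I believe, correct; what it buys is uniformity (no case analysis on $r$), at the price of having to actually verify the asserted count $\mu(I_{n-k,\,d-kc,\,c})>n-k$, which you state but do not prove. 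That count is true in the relevant range but is not completely trivial (one must normalize $c$ to $\min(c,d-kc)$ and exclude the squarefree case $d-kc=(n-k)-1$, $c=1$, as you note), so it needs to be written out.

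The genuine gap is in $I_{n,p+1,1}\subseteq C(I_{n,d,c})$, precisely in the case you yourself flag as the main obstacle, $\supp w\cap\{i,j\}=\emptyset$. You do not prove it, and the proposed remedy --- transporting an exponent ``along a path inside $\supp w$ from $i$ to $j$'' --- is not even well defined, since $\supp w$ contains neither $i$ nor $j$ and nothing in your setup supplies such a path. The missing idea, which is how the paper disposes of all cases at once, is a gcd bound: a nontrivial exchange relation between $\xb^a\xb^b$ and $\xb^{a'}\xb^{b'}$ (with $a'=a-e_i+e_j$, $b'=b+e_i-e_j$) forces $\deg\gcd(\xb^a,\xb^b)\le d-2=cp+r<c(p+1)$, so $w^c$ cannot divide both $\xb^a$ and $\xb^b$; hence some $x_\ell\in\supp w$ satisfies, say, $\deg_{x_\ell}\xb^a<c$. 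Then no path is needed --- a single auxiliary variable suffices. Writing $y_{a-e_i+e_\ell}$ for the variable attached to the generator $\tfrac{x_\ell}{x_i}\xb^a$ (a legitimate generator exactly because $a_i\ge 1$ and $a_\ell<c$), the three substitutions
\[
wy_ay_b\;\to\;\tfrac{wx_i}{x_\ell}\,y_{a-e_i+e_\ell}\,y_b\;\to\;\tfrac{wx_j}{x_\ell}\,y_{a-e_i+e_\ell}\,y_{b+e_i-e_j}\;\to\;w\,y_{a-e_i+e_j}\,y_{b+e_i-e_j}
\]
are each licensed by a linear relation and by the divisibilities $x_\ell\mid w$, $x_i\mid \tfrac{wx_i}{x_\ell}$, $x_j\mid\tfrac{wx_j}{x_\ell}$, giving $w(y_ay_b-y_{a'}y_{b'})\in L$ in every case (including $\ell\in\{i,j\}$, where a step degenerates, and symmetrically when instead $\deg_{x_\ell}\xb^b<c$). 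This replaces your entire ``routing'' discussion and is where the inequality $d-2<c(p+1)$ --- and hence the threshold $p+1$ --- actually enters.
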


\begin{proof}
We begin by noting that $I_{n,d,c}$ is a polymatroidal ideal with the strong exchange property. Hence the ideal is of fiber type (or possibly even of linear type) and its ideal of fiber relations $H$ is generated by symmetric exchange relations \cite[Theorem 5.3]{Discrete_Poly}. Consider such a non-trivial symmetric exchange relation $y_iy_j-y_ky_l\in H$ corresponding to monomials $m_i$, $m_j$, $m_k$, $m_l$, where $\deg_s(m_i)>\deg_s(m_j)$, $\deg_t(m_j)>\deg_t(m_i)$, $m_k=\frac{x_t}{x_s}m_i$ and $m_l=\frac{x_s}{x_t}m_j$. Note that if $(d,c)=(n-1,1)$, then the exchange relation are forcing that $m_i=m_l$ and $m_j=m_k$. Hence there are no non-trivial fiber relations in this case and so $I_{n,n-1,1}$ is of linear type as stated. Similarly, if $(d,c)=(1,1)$, then there are no non-trivial exchange relations and the ideal is of linear type. This also follows since $I_{n,1,1}$ is generated by a regular sequence. 

Assume from now on that we are in the case $(d,c)\neq (n-1,1)$ and $(d,c)\neq (1,1)$. Let us prove that $I_{n,p+1,1}\subseteq C(I_{n,d,c})$. By definition of $p$, we can write $d-2=cp+r$ with $0\le r\le c-1$. Let $u\in I_{n,p+1,1}$ and $0\neq y_iy_j-y_ky_l\in H$ be a fiber relation. Note that $u^c$ cannot divide both $m_i$ and $m_j$. Indeed, this would force that $\deg\gcd(m_i,m_j)\ge c(p+1)=cp+c\ge cp+r+1=d-1$. In other words, $\deg\gcd(m_i,m_j)\in\{d-1,d\}$, which is a contradiction since $m_i$ and $m_j$ cannot give rise to a non-trivial fiber relation in that case. Therefore there is some variable $x_v\in \supp u$ such that $x_v^c$ does not divide both $m_i$ and $m_j$, say it does not divide $m_i$. Writing $y_{i,s/t}$ for the variable corresponding to $\frac{x_s}{x_t}m_i$, we have 
\[
uy_iy_j = \frac{ux_s}{x_v}y_{i,v/s}y_j = \frac{ux_t}{x_v}y_{i,v/s}y_{j,s/t} =uy_{i,t/s}y_{j,t/s} = uy_ky_l \pmod L.
\]
Hence $u(y_iy_j-y_ky_l) \in L$ and $I_{n,p+1,1} \subseteq C(I_{n,d,c})$. 

Recall that as $I_{n,d,c}$ is a monomial ideal, we know from \Cref{CIisMon} that $C(I_{n,d,c})$ is also a monomial ideal. Thus for the other inclusion it is enough to show that any monomial $u$ with $|\supp u|\leq p$ is not in $C(I_{n,d,c})$.  By symmetry, it is enough to show that $u=x_1^{\gamma_1}\cdots x_p^{\gamma_p}\not\in C(I_{n,d,c})$ for any choice of $(\gamma_1,\ldots, \gamma_p)$. 

We now claim that if there is some non-trivial fiber relation $y_iy_j-y_ky_l$ where all $y$ correspond to monomials divisible by $(x_1\cdots x_p)^c$, then $u(y_iy_j-y_ky_l)\notin L$. Assume for a contradiction that $u(y_iy_j-y_ky_l)\in L$. Then by \Cref{Substitutions} and given that $L$ is bigraded, there should exist a non-empty chain of monomials $uy_iy_j\to u_1y_{i_1}y_{j_1}\to\ldots\to uy_ky_l$. As before, let $m_{i_1}$ and $m_{j_1}$ we the monomials corresponding to $y_{i_1}$ and $y_{j_1}$, respectively. 
We have $um_im_j=u_1m_{i_1}m_{j_1}$, and comparing $x_s$-degrees on both sides for any $1\le s \le p$ we obtain
$$\deg_{x_s}u+2c=\deg_{x_s}u_1+\deg_{x_s}m_{i_1}+\deg_{x_s}m_{i_2}\le \deg_{x_s}u_1+2c
$$
and thus $\deg_{x_s}u\le \deg_{x_s}u_1$ for all $1\le s \le p$. Since $\deg u=\deg u_1$ and $\supp u \subseteq \{1,2,\dots, p\}$, we conclude that $u=u_1$. Now let $\ell=m_1y_{i'}-m_2y_{j'}$ be a non-zero linear relation satisfying $uy_iy_j- u_1y_{i_1}y_{j_1}=\pm m \ell$ for some monomial $m$ in $T$ coming from $uy_iy_j\to u_1y_{i_1}y_{j_1}$. Clearly we have $m_1\ne m_2$. Therefore $u(y_iy_j-y_{i_1}y_{j_1})=\pm m(m_1y_{i'}-m_2y_{j'})$ and setting all $y$ to $1$ on both sides, we obtain a contradiction. In other words, no substitutions can be made starting at a monomial where all $y$ correspond to monomials divisible by $(x_1\cdots x_p)^c$.

We will now give examples of such fiber relations $y_iy_j-y_ky_l$ where all of the variables does correspond to monomials divisible by  $(x_1\cdots x_p)^c$. Then \Cref{Substitutions} gives us that $u(y_iy_j-y_ky_l)\notin L$ and hence $u\notin C(I_{n,d,c})$ as desired. For the following cases, recall here that $d-2=cp+r$ with $0\le r\le c-1$.\\ 
Case 1: $c=1$. In this squarefree case we have that $p=d-2$, and as $d\leq n-2$, we get $p\leq n-4$. This allows us to take $m_i=x_1\cdots x_d$, $m_j=x_1\cdots x_{d-2}x_{d+1}x_{d+2}$, $m_k=x_1\cdots x_{d-2}x_{d-1}x_{d+1}$ and $m_l=x_1\cdots x_{d-2}x_{d}x_{d+2}$.\\
Case 2: $c\geq 2$ and $r\leq c-2$. Note that $p\leq n-2$ as $d\leq (n-1)c$. Here we can take $m_i=x_1^c\cdots x_p^cx_{p+1}^{r+2}$, $m_j=x_1^c\cdots x_p^cx_{p+1}^{r}x_{p+2}^2$ and $m_k=m_l=x_1^c\cdots x_p^cx_{p+1}^{r+1}x_{p+1}$ to get the sought relation.\\
Case 3: $c\geq 2$ and $r=c-1$. In this case we note that $cp=d-2-(c+1)\leq (n-1)c - (c+1)=(n-2)c-1$, so $p\leq n-3$. Hence we can use $m_i=x_1^c\cdots x_p^cx_{p+1}^{c-1}x_{p+2}^2$, $m_j=x_1^c\cdots x_p^cx_{p+1}^{c-1}x_{p+3}^2$ and $m_k=m_l=x_1^c\cdots x_p^cx_{p+1}^{c-1}x_{p+2}x_{p+3}$. Thus we have shown that any $u$ with $|\supp u|\leq p$ is not in $C(I_{n,d,c})$, so $C(I_{n,d,c})\subseteq I_{n,p+1,1}$ and we are done.
\end{proof}

By noting that $p+1=d-1$ in the squarefree case, we get the following characterization for squarefree Veronese ideals.

\begin{Corollary}
Let $I_{n,d,1}$ be the squarefree Veronese ideal of degree $d$. Then $I_{n,d,1}$ is of linear type if $d\leq 1$ or $d\geq n-1$, while for $2\leq d \leq n-2$ we have that
\[
C(I_{n,d,1})=I_{n,d-1,1}.
\]
\end{Corollary}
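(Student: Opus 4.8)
The plan is to deduce the Corollary directly from \Cref{veroneseConductor} by specializing $c=1$, so essentially no new work is needed beyond bookkeeping. First I would recall that for squarefree Veronese ideals we always have $c=1$, and the standing reductions preceding \Cref{veroneseConductor} were: $n\ge 2$, $c\ge 0$; then $c\le d\le nc$; then $c\le d\le (n-1)c$; and finally $1\le c\le d\le (n-1)c$. With $c=1$ these become $1\le d\le n-1$. So the first step is to handle the excluded ranges: if $d\le 0$ then $I_{n,d,1}$ is either $R$ (for $d=0$, i.e.\ $I_{n,0,0}=R$) or, for the purposes of the statement, trivially of linear type; and if $d\ge n-1$, then either $d=n-1$, in which case $I_{n,n-1,1}$ is a principal ideal (generated by $x_1\cdots x_n$), hence of linear type, or $d>n-1$ wait—actually for $d=n$ we get $I_{n,n,1}=(x_1\cdots x_n)$ and for $d>n$ we get the zero ideal; in all these cases the ideal is generated by at most one element, hence trivially of linear type. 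So "$d\le 1$ or $d\ge n-1$ implies linear type" needs a short case check covering $d\le 0$, $d=1$ wait no—$d=1$ is NOT in that range, let me re-read: the Corollary says linear type if $d\le 1$ or $d\ge n-1$. For $d=1$, $I_{n,1,1}=(x_1,\dots,x_n)$ is generated by a regular sequence, hence of linear type; this is exactly the $(d,c)=(1,1)$ case in \Cref{veroneseConductor}. For $d=0$, $I=R$; for $d<0$ it's vacuous/$R$. For $d=n-1$, this is the $(d,c)=(n-1,1)$ case of \Cref{veroneseConductor}, which is stated there to be of linear type; for $d\ge n$ the ideal is principal or zero.

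Next, for the main range $2\le d\le n-2$ (which forces $n\ge 4$), I would simply invoke \Cref{veroneseConductor} with $c=1$: we are in the case $1\le c\le d\le (n-1)c$ with $(d,c)\ne(n-1,1)$ and $(d,c)\ne(1,1)$ (both excluded since $2\le d\le n-2$), so the theorem gives
\[
C(I_{n,d,1}) = I_{n,p+1,1}, \qquad p=\left\lfloor \frac{d-2}{1}\right\rfloor = d-2,
\]
whence $p+1 = d-1$ and $C(I_{n,d,1}) = I_{n,d-1,1}$, which is exactly the claimed formula. The computation $\lfloor (d-2)/c\rfloor = d-2$ when $c=1$ is immediate since $d-2\in\ZZ$.

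The only mild subtlety — and the step most likely to need a sentence of care — is making sure the boundary phrasing "$d\le 1$ or $d\ge n-1$" genuinely matches up with the hypotheses of \Cref{veroneseConductor}: the theorem only asserts linear type for the two pairs $(n-1,1)$ and $(1,1)$ within its standing range $1\le c\le d\le (n-1)c$, so for $d=0$ or $d\ge n$ one must argue separately (trivially, since the ideal is then $R$, principal, or $0$). I would write: "For $d\le 0$ we have $I_{n,d,1}=R$, and for $d\ge n$ the ideal $I_{n,d,1}$ is either principal (if $d=n$) or zero (if $d>n$), so in all these cases it is of linear type; for $d=1$ and $d=n-1$ apply \Cref{veroneseConductor}." I do not anticipate any real obstacle here — this is a direct corollary obtained by substituting $c=1$ and simplifying the floor function, plus a trivial check of the degenerate boundary values. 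No new estimates, exchange relations, or substitution arguments are required beyond what \Cref{veroneseConductor} already supplies.
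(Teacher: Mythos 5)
Your proposal is correct and follows exactly the paper's route: the corollary is obtained by substituting $c=1$ into \Cref{veroneseConductor}, noting $p+1=\lfloor d-2\rfloor+1=d-1$, and disposing of the degenerate values $d\le 0$ and $d\ge n$ where the ideal is $R$, principal, or zero. Your extra care in matching the boundary phrasing ``$d\le 1$ or $d\ge n-1$'' to the theorem's excluded pairs $(1,1)$ and $(n-1,1)$ is sound and slightly more explicit than the paper's one-line justification.
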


Following the reductions made, we can also see exactly in what cases these bounded Veronese ideals are of linear type.

\begin{Corollary}\label{BVlinear}
The bounded Veronese ideal $I_{n,d,c}$ is of linear type if and only if $n=1$ or $d\leq 1$ or $d\geq nc-1$.
\end{Corollary}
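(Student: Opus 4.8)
The plan is to deduce \Cref{BVlinear} from \Cref{veroneseConductor} together with the reduction steps carried out before it, since an ideal $I$ is of linear type precisely when $C(I)=R$, i.e.\ when the conductor fails to be a proper ideal. First I would dispose of the trivial cases outside the standing normalization: if $n=1$ then $I_{1,d,c}$ is principal, hence of linear type; if $d\le 1$ then either $I_{n,d,c}=R$ (when $d=0$) or $I_{n,d,c}=\mm$ (when $d=1$, forcing $c\ge 1$), and in both cases the ideal is of linear type (it is generated by a regular sequence, or trivially). Likewise, if $d\ge nc-1$ then after the reductions one lands in the cases $(d,c)=(n-1,1)$ or $d=nc$; the case $d=nc$ gives $I_{n,d,c}=(x_1\cdots x_n)^c$, a principal ideal, hence of linear type, and I would invoke \Cref{Lem:Rees algebra of a scaled ideal} to handle the general scaling.

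Next I would set up the main reduction exactly as in the paragraphs preceding \Cref{veroneseConductor}: using $I_{n,d,c}=I_{n,d,d}$ when $d<c$, the vanishing $I_{n,d,c}=0$ when $d>nc$, \Cref{Lem:reducing Veronese} combined with \Cref{Lem:Rees algebra of a scaled ideal} to pass from $(n-1)c<d\le nc$ to the range $c\le d\le(n-1)c$, and finally the observation that $c=0$ then forces $d=0$. This shows that, up to the already-settled trivial cases, we may assume $1\le c\le d\le(n-1)c$, and moreover that the scaling lemmas ensure the linear type property is preserved under all these reductions (since they leave the defining ideal $J$, hence $L:_TJ$, unchanged). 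In this normalized range, \Cref{veroneseConductor} tells us that $I_{n,d,c}$ is of linear type exactly when $(d,c)=(n-1,1)$ or $(d,c)=(1,1)$, because otherwise $C(I_{n,d,c})=I_{n,p+1,1}\subsetneq R$ is a proper ideal. Within $1\le c\le d\le(n-1)c$, the condition ``$(d,c)=(n-1,1)$ or $(d,c)=(1,1)$'' is equivalent to ``$d=1$ or $d\ge nc-1$'': when $c=1$ these say $d\in\{1,n-1\}$ and the range is $1\le d\le n-1$, and $d\ge nc-1=n-1$ together with $d\le n-1$ forces $d=n-1$; when $c\ge2$, $d=1<c$ is impossible and $d\ge nc-1$ with $d\le(n-1)c=nc-c\le nc-2$ is also impossible, so neither exceptional case arises, matching that $I_{n,d,c}$ is not of linear type for $c\ge2$ in this range.

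The main bookkeeping obstacle is making sure the translation between the two descriptions of the exceptional locus is airtight across all the reductions — in particular, checking that a pair $(d,c)$ with $d\ge nc-1$ (before normalization) really does get carried by \Cref{Lem:reducing Veronese} to one of the linear-type cases $(n-1,1)$ or a principal ideal, rather than slipping into the generic branch. Concretely, if $(n-1)c<d\le nc$ then $\Cref{Lem:reducing Veronese}$ rewrites $I_{n,d,c}$, up to a monomial factor, as $I_{n,(nc-d)(n-1),\,nc-d}$; when $d=nc$ this is $I_{n,0,0}=R$, and when $d=nc-1$ it is $(x_1\cdots x_n)^{c-1}I_{n,n-1,1}$, which by \Cref{Lem:Rees algebra of a scaled ideal} has the same Rees algebra as $I_{n,n-1,1}$, hence is of linear type by \Cref{veroneseConductor}. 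Conversely, for $c\le d\le(n-1)c$ with $d\ge nc-1$ one similarly forces $c=1,d=n-1$. Assembling these pieces gives precisely: $I_{n,d,c}$ is of linear type if and only if $n=1$, or $d\le1$, or $d\ge nc-1$.
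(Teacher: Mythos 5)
Your proposal is correct and follows exactly the route the paper intends: the paper states this corollary with essentially no proof beyond ``following the reductions made,'' and your argument is precisely those reductions (the cases $d<c$, $d>nc$, \Cref{Lem:reducing Veronese} plus \Cref{Lem:Rees algebra of a scaled ideal}, and then \Cref{veroneseConductor} in the normalized range $1\le c\le d\le(n-1)c$) together with the correct bookkeeping translating the exceptional locus $\{(n-1,1),(1,1)\}$ back into the condition $d\le 1$ or $d\ge nc-1$. The only negligible slip is the parenthetical ``$d=1$ forcing $c\ge1$'' (if $c=0$ and $d=1$ the ideal is zero, which is still trivially of linear type), so nothing breaks.
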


\subsection{Edge ideals of graphs with loops}

Let $G$ be a graph on the vertex set $[n]=\{1,\ldots,n\}$ and edge set $E(G)$. We allow loops on the vertices, but not parallel edges. Let $I(G)$ denote the edge ideal of such a graph, namely, $I(G)=(x_{j_k}x_{j_{k+1}}\mid \{j_k,j_{k+1}\}\in E(G))$. This need not be a squarefree ideal since $x_i^2\in I(G)$ if there is a loop at vertex $i$. Note that this edge ideal construction gives a bijection between quadratic monomial ideals and graphs which may have loops, but not parallel edges. We wish to describe $C(I(G))$ for an arbitrary such graph $G$. Before this, we need some preliminary notions.

An \emph{even closed walk of length $2q>0$} of $G$ is a non-empty sequence of edges of $G$ of the form
$$\Gamma=\{\{j_1,j_2\}, \{j_2,j_3\}, \ldots, \{j_{2q-1},j_{2q}\}, \{j_{2q},j_1\}\}.$$

Let $\Gamma$ be an even closed walk. We may write it as 
$$\Gamma=\{e_{i_1}, e_{i_2}, \ldots, e_{i_{2q}}\},$$
where each $e_{i_k}\in E(G)$. By $h_\Gamma$ we denote the binomial 
$$h_\Gamma=\prod_{k=1}^{k=q}y_{i_{2k-1}}-\prod_{k=1}^{k=q}y_{i_{2k}}.$$
Sometimes we will write $h_\Gamma=h_\Gamma^+-h_\Gamma^-$, where $h_\Gamma^+=\prod_{k=1}^{k=q}y_{i_{2k-1}}$ and $h_\Gamma^-=\prod_{k=1}^{k=q}y_{i_{2k}}$.
An even closed walk $\Gamma$ of $G$ is called \emph{primitive} if there exists no even closed walk $\Gamma'$ with $h_\Gamma\not=h_{\Gamma'}$ such that $h_{\Gamma'}^+|h_\Gamma^+$ and $h_{\Gamma'}^-|h_\Gamma^-$. An even closed walk $\Gamma$ is called \emph{trivial} if $h_\Gamma=0$.

\begin{Proposition}
\label{prop: Fiber for graph}
Let $G$ be a graph as above. Then the fiber relations of $\Rees(I(G))$ are generated by the non-trivial primitive even closed walks of $G$.
\end{Proposition}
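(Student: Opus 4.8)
**Proof proposal for Proposition (fiber relations of $\Rees(I(G))$ are generated by non-trivial primitive even closed walks).**

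The plan is to invoke the general description of the toric ideal $H=\Ker(\varphi')$ attached to a monomial ideal, specialized to the quadratic generators coming from edges. Recall that $I(G)$ is generated by $m_e = x_{i}x_{j}$ for $e=\{i,j\}\in E(G)$ (with $m_e = x_i^2$ if $e$ is a loop at $i$), and $H\subset K[y_e : e\in E(G)]$ is the kernel of $y_e\mapsto m_e t$. Since $H$ is a toric (lattice) ideal, it is generated by binomials $\bm{y^a}-\bm{y^b}$ with $\prod_e m_e^{a_e} = \prod_e m_e^{b_e}$; moreover, by the standard fact that a lattice ideal is generated by its primitive binomials (the Graver basis suffices, but in fact any generating set of the lattice of primitive elements works), it is enough to understand the primitive such binomials. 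The first step is therefore to observe that comparing the $x$-degrees in $\prod_e m_e^{a_e} = \prod_e m_e^{b_e}$ says exactly that the multiset of edges $\{e : a_e>0\}$ (with multiplicity) and the multiset $\{e : b_e>0\}$ have the same vertex-degree sequence, i.e. they form an even sub-multigraph that decomposes into closed walks with the two color classes of edges alternating. In other words, every binomial $h\in H$ is of the form $h_\Gamma$ for a (not necessarily connected) even closed walk $\Gamma$, up to multiplying by a monomial and taking products.

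The second step is to identify the primitive binomials in the algebraic sense with the primitive even closed walks in the combinatorial sense defined just above. A binomial $\bm{y^a}-\bm{y^b}\in H$ is primitive (in the lattice-ideal sense) precisely when there is no other binomial $\bm{y^{a'}}-\bm{y^{b'}}\in H$, $\ne 0$, with $\bm{y^{a'}}\mid \bm{y^a}$ and $\bm{y^{b'}}\mid \bm{y^b}$; translating through the bijection of the previous paragraph, $\bm{y^a}=h_\Gamma^+$, $\bm{y^b}=h_\Gamma^-$, and divisibility of the $y$-monomials corresponds exactly to the containment of edge-multisets, so this is verbatim the definition that $\Gamma$ be primitive, and $\bm{y^a}-\bm{y^b}\ne 0$ is verbatim non-triviality $h_\Gamma\ne 0$. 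Hence the primitive binomials in $H$ are exactly the $h_\Gamma$ with $\Gamma$ a non-trivial primitive even closed walk. Since a lattice ideal is generated by its primitive binomials (these constitute the Graver basis, which in particular generates the ideal — one can cite the standard reference on toric/binomial ideals, e.g. the book of Sturmfels or Herzog--Hibi--Ohsugi, or argue directly: any binomial in $H$ that is not primitive can be reduced modulo a primitive one of strictly smaller support, and one inducts on the exponent), this finishes the proof.

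The one genuine subtlety — and the step I expect to be the main obstacle to write cleanly — is justifying that the primitive binomials generate $H$ and that "primitive even closed walk" as defined in the paper (quantifying only over even closed walks $\Gamma'$, not over arbitrary binomials) coincides with algebraic primitivity. The point is that \emph{every} nonzero binomial of $H$ is of the form $h_\Gamma$, so quantifying over even closed walks is the same as quantifying over all nonzero binomials; this requires the degree-comparison argument of the first step to be airtight, including the bookkeeping that a multiset of edges with an even vertex-degree sequence really is a union of closed walks (an Eulerian-type argument) whose edges two-color alternately precisely because the two color classes individually induce the \emph{same} vertex-degree sequence. Once that combinatorial translation is in place, the algebraic statement "a toric ideal is generated by its Graver basis / primitive binomials" can be quoted, and the proposition follows. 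An alternative, perhaps cleaner, route is to cite directly the classical description of the toric ideal of the edge ring (Ohsugi--Hibi, Villarreal) where exactly this generation by primitive even closed walks is proved for squarefree edge ideals, and remark that the presence of loops changes nothing in the argument since a loop at $i$ simply contributes the generator $x_i^2$ and behaves like a doubled edge in all the parity bookkeeping.
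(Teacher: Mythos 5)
Your proposal is correct in substance, and its closing ``alternative route'' is in fact exactly what the paper does: the paper gives no independent argument but simply cites the classical description of the defining ideal of the edge subring (Villarreal; Herzog--Hibi--Ohsugi, Lemma~5.10) and observes that the proof there goes through verbatim when $G$ has loops, a loop at $i$ contributing $x_i^2$ and counting twice in every vertex-degree tally. The main body of your write-up is a more self-contained route through lattice-ideal theory: $H$ is toric, hence generated by its primitive binomials (Graver basis), and one then matches algebraic primitivity with the paper's combinatorial notion of a primitive even closed walk. That outline is sound, and you correctly isolate the two genuine issues: (i) the Eulerian-type decomposition showing that a primitive balanced pair of edge multisets is a single alternating even closed walk (disconnected or non-alternating configurations split off a smaller balanced pair, contradicting primitivity), and (ii) the fact that the paper's definition of primitive, which quantifies only over walks $\Gamma'$, agrees with quantifying over all nonzero binomials of $H$ (any nonzero binomial sitting under $h_\Gamma$ dominates a primitive one, which by (i) is itself some $h_{\Gamma'}$, and $h_{\Gamma'}=h_\Gamma$ is excluded by strict divisibility). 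Be aware, though, that these two points \emph{are} the entire content of the cited classical results, so your ``from scratch'' version does not shortcut them but reproves them; one should either carry out the Eulerian argument in full or, as the paper does, cite it. Your first-step phrasing that ``every binomial $h\in H$ is of the form $h_\Gamma$ up to multiplying by a monomial and taking products'' is looser than what is true or needed; only the primitive binomials need to be (and are) of the form $h_\Gamma$, and that is all the generation argument requires.
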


This proposition is a classical result if $G$ is a simple graph (see \cite{Even_walks} or \cite[Lemma~5.10]{Binomial_ideals}). In fact, the proof in the latter reference works in the more general case when $G$ is allowed to have loops.

Let $\bf{\Gamma}$ denote the set of non-trivial  primitive even closed walks of $G$. For any $\Gamma \in \bf{\Gamma}$, let 
$$N_\Gamma=(x_i\mid i \text{ is adjacent to some vertex in } \Gamma)\subset R.$$
Note that in particular variables corresponding to vertices in $\Gamma$ belong to $N_\Gamma$.
Before we can proceed with the conductor of $I(G)$, we need a small lemma.
\begin{Lemma}
\label{Lem: lindegs}
Let $I=(u_1,\ldots, u_m)$ such that $\deg(u_j)=d$ for all $j=1,\ldots, m$. Then for any minimal generator $l$ of $L$ we have $\deg_x(l)\le d$. 
\end{Lemma}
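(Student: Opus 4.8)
The statement to prove is Lemma~\ref{Lem: lindegs}: if $I=(u_1,\dots,u_m)$ with all $\deg(u_j)=d$, then every minimal generator $l$ of the linear relation ideal $L$ satisfies $\deg_x(l)\le d$.

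The plan is to exploit the bigrading on $T=R[y_1,\dots,y_m]$, where $\deg y_i=(d,1)$ in the $(\deg_x,\deg_y)$-bigrading (since $\varphi(y_i)=u_it$ and $u_i$ has $x$-degree $d$). A minimal generator $l$ of $L$ is a linear relation, so it is a binomial homogeneous of $y$-degree $1$, i.e. $l=\bm{x^a}y_i-\bm{x^b}y_j$ for some $i,j$ and monomials $\bm{x^a},\bm{x^b}$ (or a single term $\bm{x^a}y_i$, but since $l\in J=\Ker\varphi$ we need $\varphi(l)=0$, which forces the two-term shape with $\bm{x^a}u_i=\bm{x^b}u_j$). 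First I would record that $L$ is generated by the syzygetic linear forms coming from a presentation of $I$; concretely, the degree-$1$ part $L_1$ (in $y$) is spanned over $R$ by the elements $\sum_i a_{ij}y_i$ arising from syzygies $\sum_i a_{ij}u_i=0$ of $I$. So the real content is: $L$ is generated in $y$-degree $1$, and I must bound the $x$-degree of a minimal such generator by $d$.

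The key step is the following. Since all $u_j$ have the same $x$-degree $d$, the module of linear relations $L_1=\{\sum_i c_iy_i : \sum_i c_iu_i=0,\ c_i\in R\}$ is exactly the first syzygy module of $I$ with respect to the generators $u_1,\dots,u_m$, sitting inside $\bigoplus_i R y_i$. Because this is a monomial (or graded) setup and the $u_i$ are all of $x$-degree $d$, I would argue that the syzygy module is generated by its elements of $x$-degree $\le d$: indeed, for monomial ideals the first syzygies are generated by the Taylor-type relations $\frac{\lcm(u_i,u_j)}{u_i}y_i-\frac{\lcm(u_i,u_j)}{u_j}y_j$, and since $\deg u_i=\deg u_j=d$ we have $\deg\lcm(u_i,u_j)\le 2d-|\supp\gcd|\cdot(\cdots)$... more simply, $\deg\frac{\lcm(u_i,u_j)}{u_i}=\deg\lcm(u_i,u_j)-d\le 2d-d=d$ because $\deg\lcm(u_i,u_j)\le\deg u_i+\deg u_j=2d$. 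Hence each Taylor relation has $x$-degree $\le d$, and since these generate all linear syzygies, a minimal generator of $L$ — which can be taken among these (or a subset of them) — has $x$-degree $\le d$. For the general graded (non-monomial) case, one can instead observe that a minimal linear syzygy $\sum c_iy_i$ of elements all of degree $d$ must have $\deg c_i\le d$ for the relation to be minimal: if some $c_i$ had degree $>d$, one could... this direction is less immediate, so I would either restrict the lemma to the monomial case (which is the only place it is used, cf. the surrounding Section~3 on monomial ideals) or phrase the argument via the Taylor complex.

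The main obstacle I anticipate is making precise the claim that a \emph{minimal} generator of $L$ can be chosen with $x$-degree $\le d$, as opposed to merely saying $L$ is generated in $x$-degree $\le d$ — though for a $\ZZ\times\ZZ$-graded ideal these coincide, since any minimal generating set can be refined to a bihomogeneous one and every bihomogeneous minimal generator must appear (up to the graded Nakayama lemma) among any bihomogeneous generating set's degrees. So concretely: I would (1) note $L$ is bigraded with $L$ concentrated in $y$-degree $1$; (2) show $L$ is generated over $R$ by the binomials $\bm{x^{a}}y_i-\bm{x^{b}}y_j$ with $\bm{x^a}u_i=\bm{x^b}u_j$ and $\gcd(\bm{x^a},\bm{x^b})=1$, i.e.\ $\bm{x^a}=\lcm(u_i,u_j)/u_i$; (3) bound $\deg_x$ of each such binomial by $\deg\lcm(u_i,u_j)-d\le 2d-d=d$; (4) conclude any minimal generator of $L$ is one of these, hence has $\deg_x\le d$. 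Step (2) is the one requiring care, but it is the standard description of linear syzygies of a monomial ideal whose generators all have the same degree.
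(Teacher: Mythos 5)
Your proposal is correct for monomial ideals, which is the only case in which the lemma is applied (and, implicitly, the only case the paper's own proof covers, since it writes a minimal generator as a binomial $m_py_q-m_ry_s$ with monomial coefficients). The underlying computation is the same in both arguments: from $m_pu_q=m_ru_s$ one gets $m_p/\gcd(m_p,m_r)=\lcm(u_q,u_s)/u_q$, whose degree is $\deg\lcm(u_q,u_s)-d\le d$. The difference is in packaging. You invoke the standard fact that the first syzygies of a monomial ideal are generated by the Taylor relations $\frac{\lcm(u_i,u_j)}{u_i}y_i-\frac{\lcm(u_i,u_j)}{u_j}y_j$, bound their $x$-degree by $d$, and finish with graded Nakayama; this identifies an explicit generating set of $L$ but requires quoting the Taylor complex. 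The paper instead argues directly on a putative minimal generator: if $\deg m_p\ge d+1$, then $\gcd(m_p,m_r)=1$ would force $m_p\mid u_s$, impossible by degree, so the gcd is nontrivial, and dividing it out yields $l'\in L$ (a $y$-degree-one element of $J$, hence a linear relation) with $l=\gcd(m_p,m_r)\,l'$, contradicting minimality --- entirely self-contained. Two small points to tidy in your write-up: the bigrading you set at the start, $\deg y_i=(d,1)$, is not the one under which the Taylor relations have $x$-degree at most $d$; you want $\deg y_i=(0,1)$ so that $\deg_x$ records only the coefficient. And in your step (4) a minimal generator need not literally be one of the Taylor relations; you only conclude, as you yourself note via graded Nakayama, that its bidegree occurs among theirs, which is all that is needed.
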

\begin{proof}
Let $l=m_py_q-m_ry_s$ be a minimal generator of $L$ with $\deg m_p=\deg m_r\ge d+1$. Here, as before, $\varphi(y_j)=u_jt$ for all $j=1,\ldots, m$. Since $L\subseteq J$, we in particular have $\varphi(m_py_q)=\varphi(m_ry_s)$ and so $m_pu_q=m_ru_s$. If we assume $\gcd(m_p,m_r)=1$, then we get $m_p|u_s$, which is impossible by degree reasons. Therefore, we conclude $\gcd(m_p,m_r)\neq 1$. Hence $l'=\frac{l}{\gcd(m_p,m_q)}=\frac{m_p}{\gcd(m_p,m_q)}y_q-\frac{m_q}{\gcd(m_p,m_q)}y_s\in L$ and $l$ is not needed as a minimal generator of $L$.
\end{proof}

With this preparation we can now determine the conductor $C(I(G))$. 
\begin{Theorem}
\label{Graph_conductor}
Let $G$ be a graph which may have loops, but not parallel edges. Let $I(G)$ be its edge ideal. Then the conductor $C(I(G))$ of $I(G)$ is given by
\[
C(I(G))=I(G) + \bigcap_{\Gamma \in \bf{\Gamma}}N_{\Gamma}.
\]
\end{Theorem}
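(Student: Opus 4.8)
The plan is to prove the two inclusions $C(I(G)) \supseteq I(G) + \bigcap_{\Gamma}N_\Gamma$ and $C(I(G)) \subseteq I(G) + \bigcap_{\Gamma}N_\Gamma$ separately, exploiting that $I(G)$ is of fiber type (so $C(I(G)) = (L :_T HT)\cap R$ with $H$ generated by the binomials $h_\Gamma$, $\Gamma$ ranging over non-trivial primitive even closed walks, by \Cref{prop: Fiber for graph}) and that $C(I(G))$ is a monomial ideal by \Cref{CIisMon}. For the first containment, note $I(G)\subseteq C(I(G))$ follows from \Cref{radical} together with the fact that for a quadratic monomial ideal $I(G)$, any generator $x_ix_j$ actually already lies in $C(I(G))$: multiplying $h_\Gamma$ by $x_ix_j = \varphi^{-1}$-preimage of $y_et$ for the edge $e=\{i,j\}$ lets one rewrite $x_ix_j \cdot h_\Gamma^+$ as $y_e \cdot (\text{something}) $ modulo $L$, collapsing $h_\Gamma$ into $L$ (this is the standard ``one extra edge kills the walk'' move). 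Then I must show that any monomial $u$ with $\supp(u)$ meeting $N_\Gamma$ for every $\Gamma$ — equivalently, $u$ contains a variable adjacent to each primitive walk — satisfies $u\cdot h_\Gamma \in L$ for all $\Gamma$: given $\Gamma$, pick $x_v \mid u$ with $v$ adjacent to some vertex $j_k$ of $\Gamma$; then multiplying $h_\Gamma^+$ by $x_v$ and using the edge $\{v,j_k\}$, I can ``pivot'' the walk off of $j_k$, and by the primitivity/structure of even closed walks this substitution can be iterated to transform $u\,h_\Gamma^+$ into $u\,h_\Gamma^-$ using only linear relations, i.e. $u\,h_\Gamma\in L$. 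This is essentially the same computation as in the proof of \Cref{veroneseConductor}, Case analysis replaced by a walk-pivot.

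For the reverse inclusion, since $C(I(G))$ is monomial, it suffices to show that if a monomial $u$ does \emph{not} lie in $I(G)+\bigcap_\Gamma N_\Gamma$, then $u\notin C(I(G))$. If $u\notin I(G)+\bigcap_\Gamma N_\Gamma$, then in particular $u\notin \bigcap_\Gamma N_\Gamma$, so there is a primitive walk $\Gamma$ with no variable of $u$ adjacent to $\Gamma$; moreover $u\notin I(G)$, so $u$ is squarefree in the ``edge'' sense — more precisely, $\supp(u)$ is an independent set and even contains no loop-vertex. I want to exhibit this particular $h_\Gamma$ as a witness that $u\,h_\Gamma\notin L$. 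Using \Cref{Substitutions}, $u\,h_\Gamma^+\in L + u\,h_\Gamma^-$ would force a nonempty chain of monomials $u\,h_\Gamma^+ \to u_1 m_1 \to \cdots \to u\,h_\Gamma^-$, where each arrow applies a linear generator $m_p y_q - m_r y_s$ of $L$. By \Cref{Lem: lindegs}, every linear generator of $L$ has $x$-degree $\le 2$; combined with the hypothesis that $\supp(u)$ is disjoint from $N_\Gamma$ — so the edge used in any linear relation touching a $y$-variable of $\Gamma$ cannot involve a variable of $u$ — a degree-count on each $x_v$ with $v\in\supp(u)$ (exactly as in \Cref{veroneseConductor}) forces $u_1 = u$ at the first step, hence $u_1 = u$ throughout, hence the chain descends to a chain witnessing $h_\Gamma\in L$; but $h_\Gamma\ne 0$ and $h_\Gamma$ is a fiber relation, so $h_\Gamma\notin L$ (else it would be redundant as a generator of $H$, contradicting its primitivity / the minimality in \Cref{prop: Fiber for graph}). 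Therefore $u\,h_\Gamma\notin L$, so $u\notin C(I(G))$.

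The main obstacle I anticipate is the ``walk-pivot'' step in the first inclusion: showing precisely that a single adjacent variable $x_v$ suffices to reduce $u\,h_\Gamma$ to $0$ modulo $L$ for an \emph{arbitrary} primitive even closed walk, not just a $4$-cycle. This needs a careful induction on the length $2q$ of $\Gamma$: attaching the edge $\{v,j_k\}$ produces two shorter closed walks (splitting $\Gamma$ at $j_k$ and re-routing through $v$), each of which is even, and one argues that each of those is either trivial or can be handled by the induction hypothesis, while the bookkeeping of which $y$-variables appear is controlled by the bipartite-like alternation $h_\Gamma^+$ vs. $h_\Gamma^-$. A secondary subtlety is handling loops cleanly: a loop at $i$ contributes $x_i^2$ as a generator and $y_{\{i,i\}}$ as a variable, and one must make sure the ``$\supp(u)$ contains no loop-vertex'' reduction (coming from $u\notin I(G)$) is actually used so that degree counts at $x_v$ are not spoiled. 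I expect both of these to go through by the same mechanism already used in \Cref{veroneseConductor} and \Cref{Lem: lindegs}, but they require writing out the walk combinatorics explicitly.
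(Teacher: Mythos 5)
Your proposal is correct and follows essentially the same route as the paper: fiber type plus \Cref{prop: Fiber for graph} reduce everything to computing $L:_R h_\Gamma$, the containment $I(G)+N_\Gamma\subseteq L:_Rh_\Gamma$ is the same explicit rewriting modulo $L$, and the reverse inclusion is the same argument via \Cref{Substitutions}, \Cref{Lem: lindegs} and the classification of the generators of $L$. The ``walk-pivot'' step you flag as the main obstacle needs no induction on the length of $\Gamma$: attaching the edge $\{v,j_1\}$ and telescoping once around the walk with the linear relations $x_sy_{i_1}-x_{j_2}y_s$, $x_{j_{2k}}y_{i_{2k+1}}-x_{j_{2k+2}}y_{i_{2k}}$, $x_{j_{2q}}y_s-x_sy_{i_{2q}}$ carries $x_sh_\Gamma^+$ directly to $x_sh_\Gamma^-$ (and note that $I(G)\subseteq C(I(G))$ comes from this direct computation, not from \Cref{radical}, which only controls radicals).
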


\begin{proof}
We know from \cite[Theorem~8.2.1]{Villarreal} that $I(G)$ is of fiber type when $G$ is a simple graph. This result was later noted in \cite[Theorem~3.2]{Fiber_type} to extend  to the case when $G$ is allowed to have loops. Moreover, from \Cref{prop: Fiber for graph} we know that $H$ is generated by binomials which correspond to non-trivial primitive even closed walks of $G$. Let $\Gamma \in \bf{\Gamma}$ be such a walk and $h_{\Gamma}$ the corresponding fiber relation. We want to determine $(L:_Rh_{\Gamma})$. Write 
\[
h_{\Gamma} = y_{i_1}y_{i_3}\cdots y_{i_{2q-1}} - y_{i_2}y_{i_4}\cdots y_{i_{2q}}
\]
where $y_{i_k}$ corresponds to the edge $x_{j_k}x_{j_{k+1}}$ for all $k=1,\ldots, 2q$ with the convention $x_{j_{2q+1}}=x_{j_1}$. 


Let $x_s\in N_{\Gamma}$. By cyclically permuting our labels, we may assume that it is adjacent to $x_{j_1}$. Let $y_s$ correspond to this edge. Using the linear relations 
\[
x_{j_{k-1}}y_{i_k}-x_{j_{k+1}}y_{j_{k-1}}, \; x_sy_{i_1}-x_{j_2}y_s, \text{ and } x_{j_{2q}}y_s=x_sy_{i_{2q}}
\]
we get that
\begin{align*}
x_sy_{i_1}y_{i_3}\cdots y_{i_{2q-1}} &= y_sx_{j_2}y_{i_3}\cdots y_{i_{2q-1}} = y_sy_{i_2}x_{j_4}y_{i_5}\cdots y_{i_{2q-1}} 
= \dots \\ &= y_sx_{j_{2q}}y_{i_2}y_{i_4}\cdots y_{i_{2q-2}} = x_sy_{i_{2q}}y_{i_2}y_{i_4}\cdots y_{i_{2q-2}} \pmod L.
\end{align*}
Hence $x_sh_{\Gamma}\in L$ and $N_{\Gamma} \subseteq (L:_R h_{\Gamma})$. 

Finally, if $x_sx_{t}\in I(G)$ with $y_s$ corresponding to that edge, then linear relations of the form
\[
x_sx_ty_{i_k}-x_{j_k}x_{j_{k+1}}y_s \text{ and }x_{j_{k-1}}y_{i_k}-x_{j_{k+1}}y_{j_{k-1}}
\]
give
\begin{align*}
x_{s}x_ty_{i_1}y_{i_3}\cdots y_{i_{2q-1}} &= y_{s}x_{j_1}x_{j_2}y_{i_3}\cdots y_{i_{2q-1}} = y_{i_2}y_sx_{j_1}x_{j_4}y_{i_5}\cdots y_{i_{2q-1}} = \cdots \\
&= y_{i_2}y_{i_4}\cdots y_{i_{2q-2}}x_{j_1}x_{j_{2q}}y_s = y_{i_2}y_{i_4}\cdots y_{i_{2q-2}}x_{s}x_ty_{i_{2q}} \pmod{L}.
\end{align*}
Hence $x_sx_th_{\Gamma}\in L$ and we conclude that $N_{\Gamma}+I(G) \subseteq (L:_Rh_{\Gamma})$.


Next we want to show that the above containment is an equality. Note that since $I(G)$ is a monomial ideal, we know from \Cref{mgradedIdeal} that $(L:_R h_{\Gamma})$ is also a monomial ideal. Therefore, it is enough to show that no monomial $u$ outside of $N_{\Gamma}+I(G)$ belongs to $(L:_R h_{\Gamma})$. To this end, note that $L$ is generated by two types of relations. The first type is relations of the form $x_iy_j-x_ky_i$, where $y_i$ corresponds to the edge $x_ix_j$ and $y_j$ to the edge $x_jx_k$. The second type is relations of the form $x_ix_jy_k-x_kx_ly_i$, where $y_i$ corresponds to $x_ix_j$ and $y_k$ to $x_kx_l$. It is clear from \Cref{Lem: lindegs} that there are no other minimal generators in $L$. 
Now, if we have a monomial $u$ not divisible by any monomial in $N_{\Gamma}+I(G)$, then there is no chain of substitutions that can start at $uy_{i_1}y_{i_3}\cdots y_{i_{2q-1}}$ and end at $uy_{i_2}y_{i_4}\cdots y_{i_{2q}}$. Thus $uh_{\Gamma}$ is not in $L$ by \Cref{Substitutions} and we get the reverse inclusion $(L:_Rh_{\Gamma}) \subseteq N_{\Gamma}+ I(G)$ needed for equality.

With this, we can now finish up the proof as follows. 
Write $J=L+H$ where we have $H=\sum_{\Gamma\in \bf{\Gamma}}{(h_\Gamma)}$. Then,
\[
C(I(G))=L:_R J=L:_R H=\bigcap_{\Gamma\in \bf{\Gamma}}{(L:_R h_\Gamma)}.
\]
Now using $(L:_Rh_{\Gamma}) = N_{\Gamma} + I(G)$, we get
\[
C(I(G))=\bigcap_{\Gamma\in \bf{\Gamma}}{(I(G)+N_\Gamma)}=I(G)+\bigcap_{\Gamma\in \bf{\Gamma}}{N_\Gamma},
\]
where the last equality holds because all ideals in question are monomial. Hence the proof is complete.
\end{proof}

\begin{Example}
{ \em
Let $G$ be the graph illustrated in \Cref{fig:graph}. Then $N_{\Gamma_1}=(x_1,x_2,x_3,x_4,x_5)$ and $N_{\Gamma_2}=(x_5,x_6,x_7,x_8,x_9,x_{10})$, where $\Gamma_1$ is any non-trivial primitive even closed walk on the vertices $\{1,2,3,4\}$, and $\Gamma_2$ is any such walk on the vertices $\{6,7,8,9,10\}$. As there are no other non-trivial primitive even closed walks in $G$, by \Cref{Graph_conductor} we get that
\[
C(I(G))=I(G) + N_{\Gamma_1}\cap N_{\Gamma_2} = I(G) + (x_1,x_2,x_3,x_4,x_5)\cap (x_5, x_6, x_7, x_8, x_9, x_{10}).
\]
Hence $x_5\in C(I(G))$ and all other generators are quadratic.

\begin{figure}[h]
\centering
\begin{tikzpicture}[scale=2]

\node [{draw, very thick, circle, minimum size=1cm}](n1) at (0,0) {$1$};
\node [{draw, very thick, circle, minimum size=1cm}](n4) at (1,0) {$4$};
\node [{draw, very thick, circle, minimum size=1cm}](n2) at (0,-1) {$2$};
\node [{draw, very thick, circle, minimum size=1cm}](n3) at (1,-1) {$3$};
\node [{draw, very thick, circle, minimum size=1cm}](n5) at (2,0) {$5$};
\node [{draw, very thick, circle, minimum size=1cm}](n6) at (3,0) {$6$};
\node [{draw, very thick, circle, minimum size=1cm}](n7) at (2.5,-1) {$7$};
\node [{draw, very thick, circle, minimum size=1cm}](n8) at (3.5,-1) {$8$};
\node [{draw, very thick, circle, minimum size=1cm}](n9) at (2.5,1) {$9$};
\node [{draw, very thick, circle, minimum size=1cm}](n10) at (3.5,1) {$10$};

\draw[very thick] (n4)--(n1)--(n2)--(n3)--(n4)--(n5)--(n6)--(n7)--(n8)--(n6)--(n9)--(n10)--(n6);

\end{tikzpicture}
\caption{}
\label{fig:graph}
\end{figure}
}

\end{Example}

\begin{Corollary}
\label{primary_graph}
For a simple graph $G$, the following are equivalent.
\begin{enumerate}
\item $C(I(G))$ is $\mathfrak{m}$-primary.
\item $C(I(G))=\mathfrak{m}$.
\item $G$ has a non-trivial even closed walk and any vertex in $G$ is adjacent to every non-trivial even closed walk of $G$.
\end{enumerate}
\end{Corollary}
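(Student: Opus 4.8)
The plan is to base everything on the explicit formula $C(I(G)) = I(G) + \bigcap_{\Gamma \in \bf{\Gamma}} N_\Gamma$ from \Cref{Graph_conductor}, together with the fact (\Cref{CIisMon}) that $C(I(G))$ is a monomial ideal, and to prove the cycle $(1)\Rightarrow(3)\Rightarrow(2)\Rightarrow(1)$. I will repeatedly use three elementary observations: because $G$ is simple, $I(G)$ is generated by the squarefree quadrics $x_ix_j$ with $i\ne j$, so $I(G)\subsetneq\mathfrak{m}$ and no power of a single variable lies in $I(G)$; each $N_\Gamma$ is generated by a set of variables, so a variable $x_i$ lies in $N_\Gamma$ precisely when $i$ is adjacent to a vertex of $\Gamma$; and, for monomial ideals, a monomial in a sum $\mathfrak a+\mathfrak b$ lies in $\mathfrak a$ or in $\mathfrak b$. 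I will also use that $\bf{\Gamma}\ne\emptyset$ if and only if $G$ has a non-trivial even closed walk: any such walk $\Delta$ gives a non-zero $h_\Delta\in H$ (immediate from the definitions), and by \Cref{prop: Fiber for graph} the ideal $H$ is generated by $\{h_\Gamma:\Gamma\in\bf{\Gamma}\}$.

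The two easy implications come first. For $(2)\Rightarrow(1)$, $\mathfrak{m}$ is maximal, hence prime, hence $\mathfrak{m}$-primary. For $(3)\Rightarrow(2)$, condition $(3)$ forces $\bf{\Gamma}\ne\emptyset$, and since every vertex $i$ is adjacent to every non-trivial even closed walk, in particular $x_i\in N_\Gamma$ for all $i$ and all $\Gamma\in\bf{\Gamma}$; hence $\mathfrak{m}\subseteq\bigcap_{\Gamma\in\bf{\Gamma}}N_\Gamma$, and as each $N_\Gamma\subseteq\mathfrak{m}$ this intersection equals $\mathfrak{m}$, so $C(I(G))=I(G)+\mathfrak{m}=\mathfrak{m}$.

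The main work is $(1)\Rightarrow(3)$. Assume $C(I(G))$ is $\mathfrak{m}$-primary. If $\bf{\Gamma}=\emptyset$ the empty intersection gives $C(I(G))=R$, which is not $\mathfrak{m}$-primary; so $\bf{\Gamma}\ne\emptyset$, which is the first clause of $(3)$. Since $\sqrt{C(I(G))}=\mathfrak{m}$, for each $i$ some power $x_i^k$ lies in $C(I(G))=I(G)+\bigcap_\Gamma N_\Gamma$; as $x_i^k\notin I(G)$ we get $x_i^k\in\bigcap_\Gamma N_\Gamma$, and because each $N_\Gamma$ is generated by variables this forces $x_i\in N_\Gamma$, i.e.\ $i$ is adjacent to $\Gamma$, for every $\Gamma\in\bf{\Gamma}$. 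It remains to upgrade ``adjacent to every \emph{primitive} non-trivial even closed walk'' to ``adjacent to every non-trivial even closed walk''. Given a non-trivial even closed walk $\Delta$ and a vertex $v$, the element $h_\Delta$ is a non-zero member of $H=(h_\Gamma:\Gamma\in\bf{\Gamma})$, so by \Cref{Substitutions} there is a non-empty chain of substitutions rewriting $h_\Delta^+$ into $h_\Delta^-$, whose first step uses some $h_{\Gamma_1}$ with $\Gamma_1\in\bf{\Gamma}$; consequently $h_{\Gamma_1}^+$ or $h_{\Gamma_1}^-$ divides the monomial $h_\Delta^+$. For any even closed walk $\Gamma$ the edges appearing in $h_\Gamma^+$ (equivalently in $h_\Gamma^-$) have vertex set exactly $V(\Gamma)$, so comparing supports yields $V(\Gamma_1)\subseteq V(\Delta)$. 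Since $v$ is adjacent to $\Gamma_1$, it is adjacent to a vertex of $V(\Gamma_1)\subseteq V(\Delta)$, hence to $\Delta$; this proves $(3)$.

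The only delicate point is this last upgrade: the conductor formula is stated in terms of \emph{primitive} even closed walks, whereas condition $(3)$ speaks of all non-trivial even closed walks, and closing that gap is exactly where \Cref{Substitutions} and the remark that $h_\Gamma^{\pm}$ sees precisely the vertices of $\Gamma$ enter. Everything else is elementary manipulation of monomial ideals on top of \Cref{Graph_conductor}.
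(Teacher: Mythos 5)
Your proof is correct, and since the paper states this corollary without proof, your argument is exactly the intended derivation from \Cref{Graph_conductor} together with the monomial-ideal facts. The one genuinely non-routine point — upgrading ``adjacent to every \emph{primitive} non-trivial even closed walk'' to ``adjacent to every non-trivial even closed walk'' via \Cref{Substitutions} and the observation that the edges of $h_\Gamma^{\pm}$ cover all of $V(\Gamma)$ — is handled correctly, as is the use of simplicity of $G$ to rule out $x_i^k\in I(G)$.
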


Note that \Cref{primary_graph} is no longer true for non-simple graphs. For example, the graph with edge ideal $I=(x_1^2, x_2^2, x_3^2, x_4^2, x_1x_2, x_3x_4)$ has $C(I)=\mathfrak{m}^2$. 

\section{Questions and open problems}

There are still a lot of questions left that one can ask about the conductors of ideals. We will end by asking some of them.

\begin{Question}
{\em 
Let $G_{n,d,r}$ be the ideal in $k[x_1,\dots, x_n]$ generated by $r$ generic forms of degree $d$. Then computer experiments in \texttt{Macaulay2} \cite{M2} suggest that $C(G_{n,d,r})$ is of linear type for $r\leq n$, and for $r>n$ we have
\[
C(G_{n,d,r}) = \mathfrak{m}^{f(n,d,r)} 
\]
for some function $f(n,d,r)$ taking positive integer values. Is this the case, and can a formula for $f(n,d,r)$ moreover be given?
}
\end{Question}

\begin{Question}
{\em
In both the cases of edge ideals of simple graphs and squarefree Veronese ideals, we have seen that conductors of such squarefree ideals are also squarefree. However, the conductor of a squarefree monomial ideal need not be squarefree, since for
\[
I=(x_1x_2x_3, x_1x_2x_6, x_1x_3x_5, x_1x_4x_5, x_2x_3x_4, x_2x_3x_5, x_2x_3x_6, x_3x_4x_6)
\]
we have $x_4^2\in C(I)$ but $x_4\notin C(I)$. One thing to note here is that $I$ is not of fiber type while both edge ideals of simple graphs and squarefree Veronese ideals are. This raises the question if conductors of squarfree monomial ideals of fiber type are squarefree, or if there is some additional restriction on $I$ that ensures that $C(I)$ will stay squarefree.
}
\end{Question}

\begin{Question}
{\em
We have seen in \Cref{radical} that $\sqrt{I}\subseteq \sqrt{C(I)}$ if $I$ is an ideal in a Noetherian ring. Further, the stronger relation $I\subseteq C(I)$ is true for both bounded Veronese ideals and edge ideals by \Cref{Graph_conductor} and \Cref{veroneseConductor}. This does not hold in general as for example for the ideal
\[
I=(x_1^2x_5, x_1x_3x_4, x_2^2x_5, x_2x_3^2, x_3^2x_5, x_4^3)
\]
we have $x_1^2x_5\in I$ but $x_1^2x_5\notin C(I)$. However, this ideal is not of fiber type. Computations in \texttt{Macaulay2} suggest that the containment $I\subseteq C(I)$ should hold if $I$ is of fiber type. Can this be proven? 
}
\end{Question}

One case when it is true that $I\subseteq C(I)$ is when $J$ is generated in $y$-degree at most $2$. This happens for example when $I$ has the strong exchange property or is sortable. Indeed, in that case, let $I=(f_1,\ldots, f_m)$ and let $\sum_i c_i y_{j_{2i}}y_{j_{2i+1}}$ for some $c_i\in R$ be a relation. Then using the linear relations $f_ky_i-y_kf_i$, we get
\[
f_k\sum_i c_i y_{j_{2i}}y_{j_{2i+1}} = y_k\sum_i c_if_{j_{2i}}y_{j_{2i+1}} = 0 \pmod{L}.
\]
Thus $f_k\in C(I)$ and hence $I\subseteq C(I)$.


\begin{Question}
{\em 
Comparing \Cref{regularmodI} to \Cref{generalizationvalla}, only one of the results from the former is proven in the latter setting. This raises the question if the second result also holds in that setting. That is, given an ideal $I$ in a Noetherian ring $R$, and an element $f\in R$  such that $I^j:f = I^j$ for all $j\geq 0$, is it true that
\[
C((I,f))\subseteq (C(I),f)?
\]
}
\end{Question}

\end{document}